\newtheorem{theorem}{\sc Theorem}[section]
\newtheorem{lemma}[theorem]{\sc Lemma}
\newtheorem{proposition}[theorem]{\sc Proposition}
\newtheorem{corollary}[theorem]{\sc Corollary}
\begin{document}
\title[Double automorphisms]{Double automorphisms of graded Lie algebras}
\author{Cristina Acciarri}

\address{(Cristina Acciarri)  Department of Mathematics, University of Brasilia,
Brasilia-DF, 70910-900 Brazil}
\email{acciarricristina@yahoo.it}

\author{Pavel Shumyatsky} 

\address{(Pavel Shumyatsky) Department of Mathematics, University of Brasilia,
Brasilia-DF, 70910-900 Brazil}

\email{pavel@unb.br}

\thanks{Supported by CAPES and CNPq-Brazil}

\begin{abstract} We introduce the concept of a double automorphism of an $A$-graded Lie algebra $L$. Roughly, this is an automorphism of $L$ which also induces an automorphism of the group $A$. It is clear that the set of all double automorphisms of $L$ forms a subgroup in $Aut\, L$. In the present paper we prove several nilpotency criteria for a graded Lie algebra admitting a finite group of double automorphisms. One of the obtained results is as follows.

Let $A$ be a torsion-free abelian group and $L$ an $A$-graded Lie algebra in which $[L,\underbrace{L_0,\ldots,L_0}_{k}]=0$. Assume that $L$ admits a finite group of double automorphisms $H$ such that $C_A(h)=0$ for all nontrivial $h\in H$ and $C_L(H)$ is nilpotent of class $c$. Then $L$ is nilpotent and the class of $L$ is bounded in terms of $|H|$, $k$ and $c$ only.

We also give an application of our results to groups admitting a Frobenius group of automorphisms.
 
\end{abstract}

\maketitle

\section{Introduction}

Let $A$ be an additively written abelian group and $L$ an $A$-graded Lie algebra. Throughout this paper the term ``Lie algebra" means Lie algebra over some commutative ring with unity. Thus we have $L=\displaystyle\bigoplus_{a \in A}L_a$, where $L_a$ are subspaces of $L$ such that $[L_a,L_b]\subseteq L_{a+b}$ for all $a,b\in A$. Elements of the grading components $L_a$ are called \emph{homogeneous}. A subspace $K$ of $L$ is called \emph{homogeneous} if $K=\bigoplus_a(K\cap L_a)$. In this case we write $K_a=K\cap L_a$. Clearly, any subalgebra or ideal generated by homogeneous subspaces is homogeneous. A homogeneous subalgebra and the quotient over a homogeneous ideal can be regarded as $A$-graded Lie algebras with the induced gradings. A subspace $K$ is called ad-nilpotent of index (at most) $n$ if $[L,\underbrace{K,\ldots,K}_{n}]=0$.

Suppose that $\phi$ is an automorphism of $L$ with the property that for every $a\in A$ there exists $b\in A$ such that $L_a^\phi=L_b$. In this case we can define the map $\phi: A\rightarrow A$ by the rule $a^\phi=b$. If the defined map is an automorphism of $A$, we say that $\phi$ is a {\it double automorphism} of the $A$-graded Lie algebra $L$. It is clear that a product of two double automorphisms of $L$ and an inverse of a double automorphism are again double automorphisms. Thus, the set of all double automorphisms of $L$ forms a subgroup in $Aut\, L$. If $H$ is a group of double automorphisms of $L$, we can consider the fixed points of $H$ in both $L$ and $A$. Thus, $$C_L(H)=\{l\in L\, \vert\, l^\phi=l \text{ for all } \phi\in H \}$$ and $$C_A(H)=\{a\in A\, \vert\, L_a^\phi=L_a \text{ for all } \phi\in H \}.$$ The double automorphisms of graded Lie algebras have been implicitly considered in \cite{mashu,dan,khumashu,caeufe}. They arise quite naturally in the study of groups acted on by Frobenius groups of automorphisms. To formulate some of the results obtained in those papers let us use the symbol $|B|$ for the order of a finite group $B$ (and $|\phi|$ for the order of  an automorphism $\phi$). The following theorem can be easily deduced from the results obtained in \cite{khumashu}.

\begin{theorem}\label{kms} Let $A$ be a finite cyclic group and $L$ an $A$-graded Lie algebra such that $L_0=0$. Assume that $L$ admits a double automorphism $\phi$ such that $C_A(\phi^i)=0$ for all $i=1,2,\dots$ and $C_L(\phi)$ is nilpotent of class $c$. Then $L$ is nilpotent and the class of $L$ is bounded in terms of $|\phi|$ and $c$ only.
\end{theorem}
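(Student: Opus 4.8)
The plan is to recognise the hypotheses as describing a Frobenius group of automorphisms and then to run the Lie-algebra argument of \cite{khumashu} in the graded language. Put $H=\langle\phi\rangle$. The assumption that $C_A(\phi^i)=0$ for every $i$ with $\phi^i\neq 1$ forces $\phi$ to induce on $A$ an automorphism of order $|\phi|$ that acts fixed-point-freely on $A\setminus\{0\}$; hence $H$ acts faithfully and freely on $A\setminus\{0\}$, and the natural semidirect product of $A$ by $H$ is a Frobenius group with cyclic kernel $A$ and complement $H$. Dually, the grading $L=\bigoplus_{a\in A}L_a$ is the decomposition of $L$ into eigenspaces for a cyclic group $F\cong A$ acting on $L$, the component $L_0$ being the fixed-point subalgebra $C_L(F)$; so the assumption $L_0=0$ reads $C_L(F)=0$, while $C_L(\phi)=C_L(H)$ is nilpotent of class $c$ by hypothesis. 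In this dictionary the statement is precisely the Lie-algebra theorem of \cite{khumashu}: a Lie algebra on which a Frobenius group with cyclic kernel $F$ and complement $H$ acts with $C_L(F)=0$, and such that $C_L(H)$ is nilpotent of class $c$, is nilpotent of $(|H|,c)$-bounded class.

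To make the deduction self-contained I would reproduce the two main steps of that argument directly for the grading. First, since $A$ is finite cyclic and $L_0=0$, Kreknin's theorem shows that $L$ is solvable of derived length bounded in terms of $|A|$; this solvability is the structural input that lets the ensuing commutator induction run, and I would use it only as an auxiliary device, since its bound depends on $|A|$ and must be kept out of the final estimate. Second, I would exploit the two vanishing mechanisms supplied by the grading and the complement: on the one hand any commutator $[x_{a_1},\dots,x_{a_k}]$ of homogeneous elements with $a_1+\dots+a_k=0$ lies in $L_0=0$ and so vanishes; on the other hand, for any homogeneous $x\in L_a$ the trace $\sum_{h\in H}x^{h}$ is $\phi$-invariant and therefore lies in $C_L(H)$, where every product of length exceeding $c$ is zero. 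The core combinatorial lemma then rewrites a sufficiently long commutator in homogeneous elements as a combination of commutators each captured by one of these two mechanisms, forcing it to vanish and so bounding the nilpotency class of $L$.

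The delicate point, and the real content of \cite{khumashu}, is to perform this reduction so that the final bound depends on $|\phi|$ and $c$ alone and not on $|A|$. The grading group can be arbitrarily large relative to $H$ (for example $A$ of prime order $p$ with $\phi$ inducing an automorphism of small order dividing $p-1$), so any estimate routed through Kreknin-type or Higman-type bounds in $|A|$ is useless for the conclusion. The way around this is to classify commutators not by their individual indices $a_i\in A$ but by the pattern of $H$-orbits those indices occupy: since the nonzero components fall into $H$-orbits all of size $|H|$, only $|H|$ orbit positions ever need to be tracked, and this orbit-indexed bookkeeping trades the dependence on $|A|$ for a dependence on $|H|=|\phi|$. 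Once it is in place, the nilpotency of $C_L(H)$ propagates to the whole of $L$ with a class bound in terms of $|\phi|$ and $c$ only. I expect this $|A|$-free bookkeeping to be the main obstacle; the rest is formal once the Frobenius dictionary is fixed.
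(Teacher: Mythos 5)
The paper offers no proof of this theorem beyond the remark that it ``can be easily deduced from the results obtained in'' \cite{khumashu}, and your proposal is essentially that same deduction: recast $\langle\phi\rangle$ as a Frobenius complement acting on the cyclic grading group and invoke the Lie-theoretic theorem of \cite{khumashu}, whose inner workings (Kreknin solvability as an auxiliary device, vanishing of zero-sum homogeneous commutators since $L_0=0$, traces $\sum_{h}x^h$ landing in the class-$c$ subalgebra $C_L(\phi)$, and orbit-indexed bookkeeping that keeps $|A|$ out of the final bound) you sketch faithfully. The one caution is that over an arbitrary ground ring the $A$-grading need not arise as an eigenspace decomposition for a genuine action of a group $F\cong A$ (roots of unity and invertibility of $|A|$ may be unavailable), so the citation should be to the graded formulation in \cite{khumashu} --- a $(\mathbb{Z}/n)$-graded Lie ring with $L_0=0$ admitting an automorphism that induces multiplication by $r$ on the indices --- which your stated plan to run the argument ``directly for the grading'' already accommodates.
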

We note that under the hypothesis of the above theorem the order of $\phi$ must be finite. We also note that by the well-known Kreknin theorem \cite{kr} $L$ is soluble with $|A|$-bounded derived length. In the particular case where $A$ has prime order Higman's theorem \cite{hi} (see also Kreknin and Kostrikin \cite{kr-ko}) says that $L$ is even nilpotent of $|A|$-bounded nilpotency class. Hence, the most interesting part of the statement of Theorem \ref{kms} is that the class of $L$ is bounded in terms of $|\phi|$ and $c$ only and thus is independent of $|A|$. There are examples in \cite{khumashu} showing that Theorem \ref{kms} fails if $A$ is not cyclic. On the other hand, the following theorem can be deduced from the resuslts obtained in \cite{caeufe}.

\begin{theorem}\label{cef} Let $A$ be an abelian group and $L$ an $A$-graded Lie algebra such that $L_a$ is ad-nilpotent of index $k$ for all $a\in A$. Assume that $L$ admits a finite group of double automorphisms $H$ such that $C_A(h)=0$ for all nontrivial $h\in H$ and $C_L(H)$ is nilpotent of class $c$. Then $L$ is nilpotent and the class of $L$ is bounded in terms of $|H|$, $k$ and $c$ only.
\end{theorem}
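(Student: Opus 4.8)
The plan is to show that there is a bound $N=N(|H|,k,c)$ such that every Lie commutator of homogeneous elements of length greater than $N$ vanishes; since $L$ is spanned by its homogeneous components, its lower central series is spanned by such commutators, and this forces $L$ to be nilpotent of class at most $N$. I would write $q=|H|$ and work with left-normed commutators $[x_{a_1},\dots,x_{a_n}]$ where $x_{a_i}\in L_{a_i}$. A first, routine, reduction lets me assume $A$ is finitely generated: any single commutator involves only finitely many grades, and the $H$-invariant subgroup they generate yields an $H$-invariant homogeneous subalgebra to which the whole hypothesis descends (each component stays ad-nilpotent of index $k$, the fixed subalgebra stays nilpotent of class at most $c$, and $C_A(h)=0$ persists). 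The hypothesis $C_A(h)=0$ for every nontrivial $h$ means precisely that $H$ acts freely on $A\setminus\{0\}$, so every nonzero $H$-orbit has exactly $q$ elements and $0$ is the unique fixed point. For a nonzero orbit with representative $a$ and any $y\in L_a$ the trace $\widehat{y}=\sum_{h\in H}y^{h}$ lies in $C_L(H)$ and has $q$ distinct homogeneous components $y^{h}\in L_{a^{h}}$; tracking which homogeneous pieces a fixed point can have, one checks that $C_L(H)=C_{L_0}(H)\oplus\bigoplus_{O}\widehat{L_{a_O}}$, a direct sum over $C_{L_0}(H)$ and the traces over the nonzero orbits $O$.

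The combinatorial engine comes from feeding this decomposition into the nilpotency of $C_L(H)$. Since $C_L(H)$ is nilpotent of class $c$, every bracket $[\widehat{y_1},\dots,\widehat{y_{c+1}}]$ of traces vanishes. Expanding by multilinearity and collecting terms according to their total grade gives, for each target $t\in A$, a homogeneous relation
\[
\sum_{\substack{h_1,\dots,h_{c+1}\in H\\ a_1^{h_1}+\dots+a_{c+1}^{h_{c+1}}=t}}\bigl[y_1^{h_1},\dots,y_{c+1}^{h_{c+1}}\bigr]=0 .
\]
Thus the nilpotency of $C_L(H)$ is encoded as a family of linear relations among length-$(c+1)$ homogeneous commutators, indexed by the solutions in $A$ of the equation $a_1^{h_1}+\dots+a_{c+1}^{h_{c+1}}=t$. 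When a target $t$ can be chosen so that this equation has a \emph{unique} solution tuple, the relation degenerates to a single vanishing commutator $[y_1^{h_1},\dots,y_{c+1}^{h_{c+1}}]=0$, and it is exactly the freeness of the $H$-action on $A\setminus\{0\}$ that provides the rigidity needed to isolate such unique solutions.

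The second ingredient is ad-nilpotency. Each $L_a$ being ad-nilpotent of index $k$ gives $[z,u_1,\dots,u_k]=0$ whenever $u_1,\dots,u_k\in L_a$, so no grade value may be used $k$ times in immediate succession; fed through the standard collection process for graded commutators, this bounds the contribution each single orbit can make to a nonzero commutator. I would then assemble the two ingredients by induction — on the number of distinct nonzero orbits appearing in a commutator, with ad-nilpotency bounding the multiplicity of each orbit and the class-$c$ relations bounding the interaction between orbits, while the piece supported on $L_0$ is absorbed through $C_{L_0}(H)\subseteq C_L(H)$. The output is the length bound $N(|H|,k,c)$, and because only the orbit \emph{count} and the integers $q,k,c$ enter, it is manifestly independent of $A$.

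The step I expect to be the main obstacle is precisely the passage from the many-term projected relations to single-term vanishing conclusions, carried out uniformly over an arbitrary abelian $A$ and without assuming $|H|$ invertible in the ground ring, so that averaging over $H$ is unavailable. Controlling the number of solutions of $a_1^{h_1}+\dots+a_{c+1}^{h_{c+1}}=t$ under the free action, and organizing the induction so that the $L_0$-component does not inflate the bound, is the delicate technical core; once this is in place, everything else reduces to bookkeeping with the Lie identities and the orbit decomposition established at the outset.
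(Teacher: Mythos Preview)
Your trace argument is exactly the one the paper uses: form $X_i=\sum_{h\in H}x_{a_i}^h\in C_L(H)$, expand $[X_1,\dots,X_{c+1}]=0$, and project onto the homogeneous component $L_{a_1+\cdots+a_{c+1}}$. But what you flag as the ``main obstacle'' --- passing from a many-term projected relation to a single vanishing commutator --- is not actually an obstacle once the right notion is isolated. Call $(a_1,\dots,a_{c+1})$ \emph{$H$-independent} if there is no relation $\sum_j a_{i_j}=\sum_j a_{i_j}^{h_j}$ with distinct indices $i_j$ and all $h_j\neq 1$. For an $H$-independent sequence the identity tuple is the \emph{only} tuple $(g_1,\dots,g_{c+1})\in H^{c+1}$ with $\sum a_i^{g_i}=\sum a_i$ (any other tuple, after cancelling the positions where $g_i=1$, would witness $H$-dependence), so the projected relation already is the single term $[x_{a_1},\dots,x_{a_{c+1}}]=0$. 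No invertibility of $|H|$ is needed and there is nothing delicate here. This is the paper's ``selective nilpotency'' condition, and it is all that the nilpotency of $C_L(H)$ is used for.

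The genuine gap in your proposal lies in the second half: deducing global nilpotency of bounded class from (i) selective nilpotency for $H$-independent $(c+1)$-tuples and (ii) the uniform ad-nilpotency $[L_a,\underbrace{L_b,\dots,L_b}_{k}]=0$. The paper does not carry this out either --- it invokes a separate combinatorial result, \cite[Theorem~3.9]{caeufe}, which says precisely that (i) and (ii) together force $L$ to be nilpotent of $(|H|,k,c)$-bounded class. Your sketched ``induction on the number of distinct nonzero orbits'' would have to reproduce that theorem, and this is substantial: a long commutator can have an $H$-\emph{dependent} degree sequence without ever repeating a grade $k$ times consecutively, so neither (i) nor (ii) applies directly, and one must show that after a bounded number of steps the Jacobi identity lets you rearrange into a form where one of them bites. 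Until you either supply that argument or cite the external theorem, the proof is incomplete; everything else in your outline is correct and matches the paper.
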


In the present paper we will show that in the case where $A$ is torsion-free the assumption in the above theorem that $L_a$ is ad-nilpotent for all $a\in A$ can be replaced by a much weaker hypothesis.

\begin{theorem}\label{tri} Let $A$ be a torsion-free abelian group and $L$ an $A$-graded Lie algebra such that $L_0$ is ad-nilpotent of index $k$. Assume that $L$ admits a finite group of double automorphisms $H$ such that $C_A(h)=0$ for all nontrivial $h\in H$ and $C_L(H)$ is nilpotent of class $c$. Then $L$ is nilpotent and the class of $L$ is bounded in terms of $|H|$, $k$ and $c$ only.
\end{theorem}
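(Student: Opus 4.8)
The plan is to deduce Theorem \ref{tri} from Theorem \ref{cef} by showing that, under the present hypotheses, \emph{every} homogeneous component $L_a$ is ad-nilpotent of index bounded in terms of $|H|$ and $c$ (for $a\ne 0$), the case $a=0$ being exactly the hypothesis that $L_0$ is ad-nilpotent of index $k$. Once all components are known to be ad-nilpotent of index at most some $K=K(|H|,k,c)$, Theorem \ref{cef} applies and yields that $L$ is nilpotent of class bounded in terms of $|H|$, $K$, and $c$, hence in terms of $|H|$, $k$, and $c$ only. Since each $h\in H$ is an automorphism of $L$ carrying $L_a$ onto $L_{a^h}$, ad-nilpotency of $L_a$ is equivalent to that of $L_{a^h}$; thus it suffices to treat one representative $a$ per $H$-orbit.

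First I would record the routine reductions and two facts about the action on $A$. To bound the nilpotency class of $L$ by some $N=N(|H|,k,c)$ it is enough to show that every commutator of weight $N+1$ in homogeneous elements vanishes; such a commutator involves only finitely many components, so, replacing $A$ by the subgroup generated by the relevant degrees together with their $H$-images (finitely generated, torsion-free, $H$-invariant) and $L$ by the corresponding homogeneous subalgebra, I may assume $A\cong\mathbb Z^{n}$, the hypotheses being inherited, and it suffices to produce a bound independent of $n$. Next, since $C_A(H)\subseteq C_A(h)=0$ for any nontrivial $h\in H$, we have $C_A(H)=0$; as $\sum_{h\in H}a^{h}$ is $H$-fixed, this gives the orbit relation $\sum_{h\in H}a^{h}=0$ for every $a$, and $H$ acts freely on $A\setminus\{0\}$, so every nonzero orbit has exactly $|H|$ elements. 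Finally, since $A$ is torsion-free I fix a total order on $A$ compatible with addition and choose each orbit representative $a$ to be the largest element of its orbit; then $a\succ a^{g}$ for every $g\neq 1$.

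The heart of the argument is the ad-nilpotency of $L_a$ for $a\neq 0$. Fix such an $a$ (maximal in its orbit) and homogeneous $x\in L_a$, $y\in L_b$, and form the $H$-invariant elements $\eta=\sum_{h\in H}y^{h}$ and $\xi=\sum_{h\in H}x^{h}$ in $C_L(H)$. As $C_L(H)$ is nilpotent of class $c$, the commutator of $\eta$ with $m\ge c$ copies of $\xi$ vanishes, and expanding by multilinearity gives $0=\sum_{(g_0,\dots,g_m)}[y^{g_0},x^{g_1},\dots,x^{g_m}]$, the summand indexed by $(g_0,\dots,g_m)$ lying in $L_d$ with $d=b^{g_0}+\sum_{j}a^{g_j}$. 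Collecting by degree and using maximality of $a$, one has $\sum_j a^{g_j}\preceq ma$ with equality only when all $g_j=1$; hence the only summands of the top degree $b+ma$ are the diagonal term $(\mathrm{ad}\,x)^{m}y$ together with terms in which $b^{g_0}\succ b$ and only boundedly many $g_j$ differ from $1$. The plan is to isolate $(\mathrm{ad}\,x)^{m}y$ from these finitely many ``collision'' terms by an induction on the excess $b^{g_0}-b$ (which takes only finitely many values as $g_0$ runs over $H$) together with a collection argument that reorganizes the non-trivial insertions $x^{g_j}$; this should force $(\mathrm{ad}\,x)^{N}=0$ for $N=N(|H|,c)$, uniformly in $b$ and in the rank $n$.

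The main obstacle is precisely this isolation step: in $\mathbb Z^{n}$ the target degree $b+ma$ is in general attained by several tuples, so one cannot peel off a single homogeneous commutator directly, and the genuine work is the combinatorial control of the collision terms. The two features to be exploited are the freeness of the $H$-action together with the relation $\sum_h a^h=0$, which constrain which partial sums of orbit elements can coincide, and the chosen ordering, which makes the diagonal term extremal and lets the collision terms be handled inductively by their strictly larger $y$-degree $b^{g_0}$. Ensuring that the resulting bound on the ad-nilpotency index is independent of $n$ (equivalently of $A$) — so that the passage through Theorem \ref{cef} yields a class bounded only in terms of $|H|$, $k$, and $c$ — is the crux of the proof.
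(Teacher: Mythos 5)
Your global strategy coincides with the paper's: both reduce to verifying the two hypotheses of \cite[Theorem 3.9]{caeufe} (selective nilpotency plus a uniform bound of the form $[L_a,\underbrace{L_b,\dots,L_b}_{m}]=0$ for all $a,b$), and your averaging of $x$ and $y$ over $H$ inside the class-$c$ algebra $C_L(H)$ is exactly the paper's proof of selective nilpotency. The genuine gap is the step you yourself flag as ``the crux'': extracting from the expansion of $[\eta,\xi,\dots,\xi]=0$ a bound on the ad-nilpotency index of a component that is independent of $A$. Two concrete failures. First, your claim that in a top-degree collision term ``only boundedly many $g_j$ differ from $1$'' is unjustified: the degree equation reads $b^{g_0}-b=\sum_{g_j\neq 1}(a-a^{g_j})$, and while positivity of each $a-a^{g_j}$ in your compatible order forces the number of solutions to be \emph{finite}, the bound on the number of nontrivial insertions depends on the additive relations among the finitely many elements $a-a^{g}$ inside the particular group $A$ (for instance one may have $a-a^{g'}=3(a-a^{g})$, allowing multisets of different sizes with equal sums), hence is not controlled by $q$ and $c$ alone --- and uniformity in $A$ (equivalently in the rank $n$) is the whole content of the theorem. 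Second, the proposed induction on the excess $b^{g_0}-b$ has no inductive structure: the collision terms are mixed brackets $[y^{g_0},x^{g_1},\dots,x^{g_m}]$ with entries spread over several components $L_{a^{g_j}}$; they are not $\mathrm{ad}$-powers of $x$ applied to a homogeneous element, so they are not instances of the statement being proved for a smaller parameter value.

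The paper sidesteps isolation altogether, and this is the idea missing from your proposal. Selective nilpotency is extracted from the averaging identity only for $H$-\emph{independent} degree sequences, where by the very definition of independence no other term of the expansion lies in the component $L_{a_1+\cdots+a_{c+1}}$, so the diagonal term vanishes with no ordering argument at all. The torsion-free hypothesis then enters through Lemma \ref{free} --- proved by embedding $H$ into a unitary group and invoking the triangle inequality, a step entirely absent from your sketch --- which yields that $(b,\dots,b)$ is $H$-independent (Corollary \ref{bbb}). Ad-nilpotency of $L_b$ is then obtained by \emph{counting} rather than by commutator manipulation: Lemma \ref{LemmaM} bounds by $q^{c+1}$ the set of $v$ making $(b,\dots,b,v)$ dependent, so the set $D(b)$ of degrees $a$ with $[L_a,\underbrace{L_b,\dots,L_b}_{c}]\neq 0$ has at most $q^{c+1}+1$ elements; since the degrees $a_i=a+ib$ are pairwise distinct (torsion-freeness again), some $d\leq q^{c+1}+2$ satisfies $[L_{a_d},\underbrace{L_b,\dots,L_b}_{c}]=0$, whence $[L_a,\underbrace{L_b,\dots,L_b}_{d+c}]=0$ with a bound depending only on $q$ and $c$. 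To complete your argument you would have to replace the isolation/collection step by something of this counting type; as written, your proposal proves the easy half (the reduction and selective nilpotency) and leaves the hard half as a hope.
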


If $B$ is a subgroup of $A$, we denote by $L_B$ the sum $\bigoplus_{a \in B}L_a$ which of course is a subalgebra of $L$. Using parts of the proof of Theorem \ref{tri} we will deduce the following theorem.

\begin{theorem}\label{posa} There exists a constant $P=P(c,q)$ depending only on $c$ and $q$ with the following property: Let $A$ be an elementary group of order $p^2$ where $p\geq P$ and let $L$ be an $A$-graded Lie algebra such that $L_B$ is nilpotent of class at most $c$ for every proper subgroup $B<A$. Assume that $L$ admits a group of double automorphisms $H$ such that $C_A(h)=0$ for all nontrivial $h\in H$ and $C_L(H)$ is nilpotent of class at most $c$. Then $L$ is nilpotent and the class of $L$ is bounded in terms of $c$ and $|H|$ only.
\end{theorem}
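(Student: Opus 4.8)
The plan is to bound the nilpotency class by showing that every left-normed homogeneous commutator $[x_{a_1},\dots,x_{a_n}]$ with $x_{a_i}\in L_{a_i}$ vanishes once $n$ exceeds a bound $N=N(c,|H|)$; since $L$ is generated by its homogeneous elements, this yields the theorem. Throughout I record the partial sums $s_j=a_1+\dots+a_j\in A$, so that the prefix $[x_{a_1},\dots,x_{a_j}]$ lies in $L_{s_j}$, and I view $(s_0,s_1,\dots,s_n)$ as a walk in the plane $A$. The hypothesis on proper subgroups is a strong substitute for the ad-nilpotency of $L_0$ used in Theorem~\ref{tri}: since every nonzero element of $A$ lies on a unique line $B<A$, if more than $c$ of the weights $a_1,\dots,a_n$ lie on one line $B$, then collecting these factors together by the Jacobi identity expresses the commutator through elements of the subalgebra $L_B$, which is nilpotent of class at most $c$, so the commutator vanishes. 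The case $B=\{0\}$ recovers that $L_0$ is nilpotent of class at most $c$, and in a nonzero commutator at most $c$ factors may be drawn from any single line of $A$.

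The second ingredient is symmetrization over $H$. Because $C_A(h)=0$ for every nontrivial $h$, the group $H$ acts freely on $A\setminus\{0\}$; hence for nonzero $a$ and $x\in L_a$ the summands of $\hat x=\sum_{h\in H}x^{h}$ lie in the $|H|$ distinct components of the regular orbit of $a$, so the map $x\mapsto\hat x$ is injective and $\hat x\in C_L(H)$. The nilpotency of $C_L(H)$ of class at most $c$ then furnishes the relations $[\hat x_1,\dots,\hat x_{c+1}]=0$; expanding these and sorting the resulting terms by their grading components yields the collection relations that let one rewrite a long commutator of $L$ as a combination of commutators whose weight-walks either concentrate more than $c$ factors on a single line (and so die by the previous paragraph) or are strictly shorter. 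This is precisely the rewriting engine built in the proof of Theorem~\ref{tri}, and I would invoke it rather than reprove it.

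The role of the largeness of $p$ is to make this engine, which was designed for a torsion-free grading group, applicable to the present finite $A$. A commutator of length at most $N$ involves at most $N$ weights and $O(N^{2})$ partial sums, and every test the engine performs is an integral linear relation $\sum_i c_i a_i=0$ with $|c_i|\le 2N$. Choosing $P=P(c,q)$ so that $p\ge P$ exceeds a suitable function of $N$, I would lift the finitely many weights that occur to elements of $\mathbf{Z}^{2}$ in such a way that exactly the same small-coefficient relations hold over $\mathbf{Z}$ as over the field of $p$ elements; collinearity, coincidences $s_i=s_j$, and returns to the origin then agree in both pictures, so no accidental relation created by reduction modulo $p$ can interfere. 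Running the torsion-free engine on the lifted data and pushing the conclusion back down forces a commutator of length $N(c,|H|)$ to vanish, which is the required bound on the class.

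The hard part will be transporting the action of $H$ across this lift. The symmetrization relations require closure of the occurring weights under $H$, whereas a finite subgroup of $GL_2$ of the field of $p$ elements acting fixed-point-freely need not lift to a fixed-point-free subgroup of $GL_2(\mathbf{Q})$, so one cannot simply replay the characteristic-zero argument together with a lifted copy of $H$. I expect this to be handled by using the largeness of $p$ only to guarantee the genericity of the configuration — each nonzero weight lying on its own line, the $H$-orbits through the occurring weights having full size $|H|$ and pairwise distinct lines, and no short partial sum vanishing by coincidence — and then feeding these genericity properties, rather than an actual lift of $H$, into the combinatorial lemma abstracted from the proof of Theorem~\ref{tri}. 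Verifying that the engine uses only such orbit-genericity of the $H$-action, and quantifying how large $p$ must be to secure it for all commutators up to length $N(c,|H|)$, is where the main work lies; the parameter $q$ in $P(c,q)$ is $|H|$, entering precisely through these orbit-size and line-distinctness requirements.
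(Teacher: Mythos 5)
There is a genuine gap, and it occurs in two places. First, the claim in your opening paragraph --- that if more than $c$ of the weights lie on a single line $B<A$ then Jacobi collection expresses the commutator through $L_B$ and kills it --- is false. Collection cannot bracket the $L_B$-entries together when they are interleaved with factors from other components: the foreign factors get nested inside the collected terms, so nilpotency of $L_B$ gives nothing. Concretely, take $a,b$ independent in $A$, let $L_b=\langle x\rangle$, $L_{a+ib}=\langle y_i\rangle$ for $0\le i\le k$, with $[y_i,x]=y_{i+1}$ and all other brackets zero. No proper subgroup contains two of the weights $a+ib$ (their difference would force $b$, hence $a$, into the subgroup), and $\langle b\rangle$ meets the support only in $b$; so every $L_B$ is abelian, yet $[y_0,\underbrace{x,\ldots,x}_{k}]=y_k\neq 0$ with $k$ factors on the line $\langle b\rangle$. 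The paper extracts from the $L_B$-hypothesis only the statement $[L_a,\underbrace{L_0,\ldots,L_0}_{c}]=0$, which is legitimate because $a$ and $0$ do lie in a common proper subgroup. For $b\neq 0$ the needed uniform ad-nilpotency $[L_a,\underbrace{L_b,\ldots,L_b}_{m}]=0$ --- the second input that the engine \cite[Theorem 3.9]{caeufe} (which is also what the proof of Theorem \ref{tri} invokes, rather than a rewriting procedure internal to that proof) demands alongside selective nilpotency --- requires the $H$-action: one shows $(b,\ldots,b)$ ($c$ copies) is $H$-independent, applies Lemma \ref{LemmaM} to get $|D(b)|\le q^{c+1}+1$, and uses the translates $a_i=a+ib$ (distinct since $p$ is large) to find $d\le q^{c+1}+2$ with $[L_{a_d},\underbrace{L_b,\ldots,L_b}_{c}]=0$, whence $m=d+c$ works. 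Your sketch contains no substitute for this counting step; your symmetrization paragraph correctly yields selective nilpotency, but that alone does not feed the engine.

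Second, the difficulty you honestly flag at the end --- transporting the $H$-action across a lift to $\mathbf{Z}^2$ --- is precisely where the theorem lives, and your fallback to ``orbit-genericity'' does not close it. The decisive tests are $H$-dependence relations of the form $a_{i_1}+\cdots+a_{i_m}=a_{i_1}^{h_1}+\cdots+a_{i_m}^{h_m}$; these involve the action itself and are invisible to any lift of the weights that forgets $H$, and full orbit size, pairwise distinct lines, and absence of short vanishing sums do not exclude a relation $nb=b^{h_1}+\cdots+b^{h_n}$ with $n\le c$. The paper closes exactly this gap by compactness rather than by lifting: Lemma \ref{free1} assumes counterexamples for infinitely many primes, forms an ultraproduct of the corresponding two-dimensional representations of $H$, and obtains a fixed-point-free representation in characteristic zero carrying the same relation, contradicting Lemma \ref{free} (whose proof is the unitarization and triangle-inequality argument). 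This yields a bound $p_0(c,q)$ below which such relations can exist, Corollary \ref{ppp} then gives the $H$-independence of $(b,\ldots,b)$ for $p>p_0(c,q)$, and $P=p_0(c,q)+q^{c+1}+1$ also guarantees the distinctness of the translates $a_i$. Without an argument of this kind (or an explicit effective version of it), your proposal proves neither the $H$-independence statements nor the ad-nilpotency condition, so the bound $N(c,|H|)$ is never actually established.
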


Theorem \ref{posa} has an immediate applications to groups admitting Frobenius groups of automorphisms. Recall that a Frobenius group $FH$ with kernel $F$ and complement $H$ can be characterized as a finite group that is a semidirect product of a normal subgroup $F$ by $H$ such that $C_F(h)=1$ for every $h\in H\setminus\{1\}$. By Thompson's theorem \cite{th} the kernel $F$ is nilpotent, and by Higman's theorem \cite{hi} the nilpotency class of $F$ is bounded in terms of the least prime divisor of $|H|$ (explicit upper bounds for the nilpotency class are due to Kreknin and Kostrikin \cite{kr,kr-ko}). The following result was obtained in \cite{caeufe}.
\medskip

{\it Let $FH$ be a Frobenius group with abelian kernel $F$ of rank at least three and with complement $H$ of order $q$. Suppose that $FH$ acts coprimely on a finite group $G$ in such a manner that $C_{G}(H)$ and $C_{G}(a)$ are nilpotent of class at most $c$ for all $a\in F \setminus \{1\}$. Then $G$ is nilpotent of $(c,q)$-bounded class.}
\medskip

The assumption that the rank of $F$ is at least three in the above result is essential. In \cite{khumashu} one can find examples of finite $p$-groups $G$ of unbounded derived length that admit an action of the Frobenius group $FH$ of order 12 such that $C_G(H)$ and $C_G(a)$ are abelian for all nontrivial $a\in F$. Yet, from our Theorem \ref{posa} we can easily derive the following result. 

\begin{theorem}\label{grsa} Let $P$ be as in Theorem \ref{posa} and  $p$ a prime bigger than $P$. Let $FH$ be a Frobenius group whose kernel $F$ is non-cyclic of order $p^2$ and complement $H$ is of order $q$. Suppose that $FH$ acts coprimely on a finite group $G$ in such a manner that $C_{G}(H)$ and $C_{G}(a)$ are nilpotent of class at most $c$ for all $a\in F \setminus \{1\}$. Then $G$ is nilpotent of $(c,q)$-bounded class.
\end{theorem}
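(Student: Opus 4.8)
The plan is to deduce Theorem \ref{grsa} from the graded Lie algebra statement of Theorem \ref{posa} by passing to associated Lie rings. Since the action of $FH$ on $G$ is coprime, $|G|$ is coprime to both $p$ and $q$, and we may work with $FH$-invariant Sylow subgroups. The first task is to establish that $G$ is nilpotent; once this is known, $G$ is the direct product of its Sylow subgroups, each of which is $FH$-invariant and inherits all the hypotheses, so it suffices to bound the nilpotency class of a single Sylow subgroup $S$ on which $FH$ acts coprimely with $C_S(H)$ and all $C_S(a)$ for $a\in F\setminus\{1\}$ nilpotent of class at most $c$. I expect the nilpotency of $G$ to be the first hurdle: since $F$ is non-cyclic, $G=\langle C_G(a):a\in F\setminus\{1\}\rangle$, and I would combine this coprime generation with standard criteria for nilpotency of a group generated by nilpotent $F$-invariant subgroups (reducing along the way to the soluble and then nilpotent case via known results on coprime actions of non-cyclic abelian groups and on Frobenius groups of automorphisms).

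For the Sylow subgroup $S$ I would form the associated Lie ring $L=\bigoplus_i \gamma_i(S)/\gamma_{i+1}(S)$, whose nilpotency class equals that of $S$, and on which $FH$ acts by coprime automorphisms. To produce the grading I would extend scalars to a ring $R$ containing a primitive $p$-th root of unity in which $p$ is invertible, and decompose $\bar L=L\otimes R$ into eigenspaces for the now diagonalizable action of the elementary abelian group $F$. These eigenspaces are indexed by the character group $A=\hat F$, which is again elementary abelian of order $p^2$, and one checks $[\bar L_\chi,\bar L_\psi]\subseteq\bar L_{\chi\psi}$, so this is an $A$-grading of $\bar L$ in the required sense.

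The next step is to verify that $H$ and this grading satisfy the hypotheses of Theorem \ref{posa}. The complement $H$ permutes the homogeneous components compatibly with its conjugation action on $F$, hence on $\hat F=A$, so $H$ is a group of double automorphisms; and since $FH$ is Frobenius, $H$ acts fixed-point-freely on $F\setminus\{1\}$ and therefore on $A\setminus\{0\}$, giving $C_A(h)=0$ for every nontrivial $h$. For the centralizer condition I would use the standard identity $C_{\bar L}(H)=C_L(H)\otimes R=L(C_S(H))\otimes R$ for coprime actions, which is nilpotent of class at most $c$ because $C_S(H)$ is. Finally, for a proper subgroup $B<A$ I would identify $\bar L_B=\bigoplus_{\chi\in B}\bar L_\chi$ with the fixed points $C_{\bar L}(F_0)=L(C_S(F_0))\otimes R$, where $F_0=\bigcap_{\chi\in B}\ker\chi\leq F$. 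When $B$ has order $p$, $F_0=\langle a\rangle$ is cyclic and $C_S(F_0)=C_S(a)$ is nilpotent of class at most $c$ by hypothesis, while for $B=0$ one has $C_S(F)\leq C_S(a)$, again of class at most $c$; thus every proper $\bar L_B$ is nilpotent of class at most $c$.

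With all hypotheses in place, Theorem \ref{posa}, applicable because $p>P$, yields that $\bar L$ is nilpotent of $(c,q)$-bounded class $d$. As $L$ embeds in $\bar L=L\otimes R$ and the lower central series commutes with this base change, $\gamma_{d+1}(L)=0$, so $L$, and hence $S$, is nilpotent of class at most $d$. This bounds the class of every Sylow subgroup, and therefore of the nilpotent group $G$, by the same $(c,q)$-bounded quantity. The main obstacle I anticipate lies in the preliminary reduction to the nilpotent case and in the careful translation of fixed-point data into grading data—in particular matching each proper subalgebra $\bar L_B$ with a centralizer $C_S(a)$—rather than in the invocation of Theorem \ref{posa} itself.
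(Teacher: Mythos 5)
Your Lie-ring reduction coincides with the paper's own proof almost step for step: form the associated Lie ring, extend scalars by a primitive $p$-th root of unity, grade by the character group $A=\hat{F}$, check that $H$ acts as double automorphisms with $C_A(h)=0$, that $C_{\bar L}(H)$ has class at most $c$ by the coprime fixed-point identity, and that each $\bar L_B$ for proper $B<A$ lies in $C_{\bar L}(f)$ for some nontrivial $f\in F$ (the paper phrases this as $F$ acting on $L_B$ through a cyclic quotient; your annihilator $F_0=\bigcap_{\chi\in B}\ker\chi$ is the same observation made more explicit), then invoke Theorem \ref{posa} and descend from $\bar L$ to $L$. Your preliminary reduction to Sylow subgroups is harmless but unnecessary, since the coprimality of the action already gives $(|G|,p)=1$ for the whole group.

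The genuine gap is in the step you defer: the nilpotency of $G$. You propose to get it from ``standard criteria for nilpotency of a group generated by nilpotent $F$-invariant subgroups,'' but no such criterion exists, and the paper itself exhibits a counterexample to the principle you are implicitly invoking: the group $G=[B,v_1]K$ constructed just before Proposition \ref{nilpo} admits a four-group $V$ of automorphisms with $C_G(v)$ abelian for every nontrivial $v\in V$, is generated by these centralizers by Lemma \ref{11}(4), and is not nilpotent. So coprime generation $G=\langle C_G(a)\ \vert\ a\in F\setminus\{1\}\rangle$ together with nilpotency of each $C_G(a)$ is insufficient; the hypothesis that $C_G(H)$ is nilpotent is indispensable, and your sketch never uses it at this stage. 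This is exactly why the paper proves Proposition \ref{nilpo} by a minimal-counterexample argument: reduce to $G=RS$ with $R$ a unique minimal normal $r$-subgroup, handle the case $C_G(F)=1$ by a theorem from \cite{khumashu} on Frobenius groups of automorphisms, and otherwise use Lemma \ref{nuj} (for an $r$-element $x\in C_G(A)$ the index $[G:C_G(x)]$ is an $r$-power, which rests on the nilpotency of the $C_G(a)$ and coprime generation of Sylow subgroups), with solubility supplied by Corollary \ref{solu} via the Burnside--Kazarin theorem \cite{kazarin} and Martineau's theorem \cite{martineau}. For kernels of rank at least three nilpotency is indeed a known result (\cite{ward1}), but for the rank-two kernel of Theorem \ref{grsa} it was not previously available --- it is precisely Proposition \ref{nilpo} of this paper --- so this step needs an actual argument rather than the citation you anticipate.
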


As we have mentioned earlier the above theorem fails unless a restriction on the prime $p$ is made. On the other hand, in the proof of Theorem \ref{grsa} we establish that if a finite group $G$ admits a coprime action by a Frobenius group $FH$ where $F$ is non-cyclic abelian and $C_{G}(H)$ and $C_{G}(a)$ are nilpotent for all $a\in F\setminus \{1\}$, then $G$ is nilpotent (see Proposition \ref{nilpo}). This holds without any restrictions on prime divisors of $F$.

Throughout the paper we use the expression ``$(m,n)$-bounded'' for ``bounded above in terms of $m,\,n$ only''.

\section{Theorems \ref{cef} and \ref{tri}}

In this section we will describe the proofs of Theorem \ref{cef} and Theorem \ref{tri}. In the case where $A$ is finite Theorem \ref{cef} was essentially proved in \cite{caeufe}. However it is easy to see that the proof given in \cite{caeufe} can be adapted to the case of infinite $A$ with only minor changes. 

Let $A$ be the additively written abelian group and assume that the finite group $H$ acts on $A$ by automorphisms. We also assume that $|H|=q$ and $C_{A}(h)=0$ for every $h\in H\backslash\{1\}$. We will work with finite sequences of non-zero elements of $A$. Let $a_1,\ldots, a_s$ be not necessarily distinct non-zero elements in $A$. We say that the sequence $(a_1,\ldots, a_s)$ is $H$-\emph{dependent} if and only if there exist distinct $i_1,\ldots,i_m\in \{1,2,\ldots,s\}$ and not necessarily distinct $h_1,\ldots,h_m \in H\setminus \{1\}$ such that $$a_{i_{1}}+\cdots+a_{i_{m}}=a_{i_{1}}^{h_1}+\cdots +a_{i_{m}}^{h_m}.$$

If the sequence $(a_1,\ldots, a_s)$ is not $H$-{dependent}, we will call it 
$H$-independent. We denote by $F_s$ the set of all $H$-independent 
sequences of length $s$ and $Z_s$ the set of all $H$-dependent ones.

\begin{proof}[Proof of Theorem \ref{cef}] By \cite[Theorem 3.9]{caeufe} the nilpotency of $L$ will be established once it is shown that
\begin{equation}\label{Cond 1}
[L_{a_{1}},L_{a_{2}},\ldots,L_{a_{c+1}}]=0 \ \textrm{ whenever } (a_1,a_{2},\ldots,a_{c+1})\in F_{c+1}
\end{equation}

\noindent and
\begin{equation}\label{Cond 2}
[L_a,\underbrace{L_b,\ldots,L_b}_{k}]=0 \ \textrm{ for all } a,b\in A.
\end{equation}

In \cite{khumashu} the condition (\ref{Cond 1}) was called ``selective nilpotency of $L$". Since the condition (\ref{Cond 2}) is immediate from the hypothesis, it is sufficient to show that $L$ satisfies the selective nilpotency condition.

Let $h_1,\dots,h_{q-1}$ be the nontrivial elements of $H$. We will use the fact that for any $u\in L$ the sum $$u+{u}^{h_1}+\cdots+{u}^{h_{q-1}}$$ belongs to the nilpotent subalgebra $C_L(H)$. Choose arbitrarily a sequence of non-zero elements $a_1,a_2,\dots,a_{c+1}\in A$ and elements $$x_{a_1}\in L_{a_1},\ \ \ldots,\ \ x_{a_{c+1}}\in L_{a_{c+1}}.$$ Consider the elements
\begin{eqnarray}
\begin{array}{ccl}
  X_1&=&x_{a_{1}}+x_{a_{1}}^{h_1}+\cdots+x_{a_{1}}^{h_{q-1}}, \\
 & \vdots &  \\
  X_{c+1}&=&x_{a_{c+1}}+x_{a_{c+1}}^{h_1}+\cdots+x_{a_{c+1}}^{h_{q-1}}.
\end{array}
\end{eqnarray}

Since all of them lie in $C_{L}(H)$ and since $C_{L}(H)$ is nilpotent 
of class $c$, it follows that $$[X_1,\ldots,X_{c+1}]=0.$$

After expanding the brackets, the left-hand side of the above equality involves the term 
$[x_{a_1},\ldots,x_{a_{c+1}}]$. Suppose that the commutator 
$[x_{a_1},\ldots,x_{a_{c+1}}]$ is non-zero. Then there must be other non-zero 
terms in the expanded expression that belong to the same component 
$L_{a_1+\cdots+a_{c+1}}$. So there exist $i_1,\ldots,i_m\in \{1,2,\ldots,c+1\}$ and $h_1,\ldots,h_m\in H\setminus\{1\}$ such that
$$a_{i_{1}}+\cdots+a_{i_{m}}=a_{i_{1}}^{h_1}+\cdots+a_{i_{m}}^{h_{m}}.$$

Thus, we have shown that $[x_{a_1},\ldots,x_{a_{c+1}}]=0$ whenever the 
sequence $(a_{1},\dots,a_{c+1})$ is $H$-independent. This completes the proof.
\end{proof}

We will now embark on the proof of Theorem \ref{tri}. For a sequence $(a_1,\ldots,a_s)\in F_s$ we denote by $M(a_1,\ldots,a_s)$ the set of all $v\in A$ such that $(a_1,\ldots,a_s,v)\in Z_{s+1}$. The next lemma was proved in \cite{caeufe} under the additional hypothesis that $A$ is finite. Here we make no assumptions on the order of $A$.

\begin{lemma}\label{LemmaM}
If $(a_1,\ldots,a_s)\in F_s$, then $|M(a_1,\ldots,a_s)|\leq q^{s+1}$.
\end{lemma}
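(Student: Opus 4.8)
The plan is to rephrase the $H$-dependence condition additively and then reduce the count of admissible $v$ to a count of ``patterns'' of indices and automorphisms. First I would observe that the defining relation $a_{i_1}+\cdots+a_{i_m}=a_{i_1}^{h_1}+\cdots+a_{i_m}^{h_m}$ can be rewritten as $\sum_{j=1}^m(1-h_j)a_{i_j}=0$, where for $h\in H$ and $a\in A$ I write $(1-h)a$ for $a-a^h$. With this notation a sequence is $H$-dependent precisely when some choice of distinct indices and nontrivial automorphisms makes such a sum vanish.

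Now take any $v\in M(a_1,\ldots,a_s)$, so that $(a_1,\ldots,a_s,v)\in Z_{s+1}$. By definition there are distinct indices forming a set $I\subseteq\{1,\ldots,s+1\}$ and nontrivial $h_i$ ($i\in I$) with $\sum_{i\in I}(1-h_i)a_i=0$, where $a_{s+1}:=v$. The key first step is to note that the index $s+1$ must lie in $I$: otherwise the same relation would witness $H$-dependence of $(a_1,\ldots,a_s)$, contradicting $(a_1,\ldots,a_s)\in F_s$. Writing $I=J\cup\{s+1\}$ with $J\subseteq\{1,\ldots,s\}$ and setting $h:=h_{s+1}$, I can isolate $v$ to obtain $(1-h)v=-\sum_{j\in J}(1-h_j)a_j$.

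The crucial observation is that the endomorphism $1-h$ of $A$ is injective for every nontrivial $h$, since $(1-h)v=0$ forces $v=v^h$, that is $v\in C_A(h)=0$. Consequently, once the data $(h,\,J,\,(h_j)_{j\in J})$ are fixed, the right-hand side is a determined element of $A$ and $v$ is uniquely recovered. This yields a well-defined map from the set of admissible data tuples \emph{onto} $M(a_1,\ldots,a_s)$, and it remains only to bound the number of such tuples. There are $q-1$ choices for $h$, and for the pair $(J,(h_j))$ each of the indices $1,\ldots,s$ independently either stays out of $J$ or enters $J$ with one of the $q-1$ nontrivial automorphisms, giving $1+(q-1)=q$ possibilities per index and hence $q^s$ in total. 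Therefore $|M(a_1,\ldots,a_s)|\le(q-1)q^s\le q^{s+1}$.

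I do not expect a serious obstacle, as the argument is essentially combinatorial. The only point demanding care is the reduction showing that $s+1$ genuinely participates in the relation, which is exactly where the hypothesis $(a_1,\ldots,a_s)\in F_s$ enters, together with the injectivity of $1-h$ coming from $C_A(h)=0$. This injectivity is precisely what allows us to dispense with the finiteness assumption on $A$ that was imposed in the corresponding result of \cite{caeufe}.
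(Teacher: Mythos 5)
Your proposal is correct and follows essentially the same route as the paper's proof: both observe that the automorphism attached to $v$ must be nontrivial (else $(a_1,\ldots,a_s)$ would be $H$-dependent), use $C_A(h)=0$ to get injectivity of the endomorphism $x\mapsto x-x^{h}$ so that $v$ is uniquely determined by the choice of automorphisms, and then count those choices. Your explicit bookkeeping with the subset $J$ even yields the marginally sharper count $(q-1)q^{s}\leq q^{s+1}$, whereas the paper absorbs the trivial automorphisms into the relation and states the bound $q^{s+1}$ directly.
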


\begin{proof}
Suppose that $(a_1,\ldots,a_s,v)\in Z_{s+1}$. We have $$a_1+a_2+\cdots+a_s+v=a_1^{h_1}+a_2^{h_2}+\cdots+a_s^{h_s}+v^{h_{0}}$$ for suitable $h_i\in H$, where not all of the $h_i$ are equal 1. In fact, $h_0\neq1$, for otherwise the sequence $(a_1,a_2,\ldots,a_s)$ would not be $H$-independent. We see that $$v^{(1-h_{0})}=a_1^{h_1}+a_2^{h_2}+\cdots+a_s^{h_s}-a_1-\cdots-a_s.$$ Since $C_{A}(h_0)=0$, it follows that the map that takes $x$ to $x^{(1-h_{0})}$ is injective on $A$. Hence, we write $$v=(a_1^{h_1}+a_2^{h_2}+\cdots+a_s^{h_s}-a_1-\cdots-a_s)^{(1-h_0)^{-1}}.$$ There are at most $q^{s+1}$ possible choices for the automorphisms $h_i$ in the above equality. Therefore, there are at most $q^{s+1}$ possibilities for $v$, as required.
\end{proof}

From now on in this section $A$ will be assumed to be a torsion-free abelian group.
\begin{lemma}\label{free} Let $n$ be a positive integer and $h_1,\dots,h_n$ not necessarily distinct elements of $H\setminus\{1\}$. If $na=a^{h_1}+\dots+a^{h_n}$ for an element $a\in A$, then $a=0$.
\end{lemma}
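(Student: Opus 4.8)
The plan is to linearize the relation and exploit the positivity that an $H$-invariant inner product provides. First I would reduce to the finite-rank case: let $A_0$ be the subgroup of $A$ generated by the finitely many elements $a^h$, $h\in H$. Then $A_0$ is $H$-invariant, finitely generated and torsion-free, hence $A_0\cong\mathbb{Z}^r$ for some $r$, and since $A_0\subseteq A$ we still have $C_{A_0}(h)\subseteq C_A(h)=0$ for every nontrivial $h$. The relation $na=a^{h_1}+\cdots+a^{h_n}$ lives entirely in $A_0$, so there is no loss in assuming $A=\mathbb{Z}^r$ from the outset.

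Next I would pass to the real vector space $V=A_0\otimes_{\mathbb{Z}}\mathbb{R}\cong\mathbb{R}^r$, into which $A_0$ embeds; here torsion-freeness is exactly what guarantees that the embedding is injective, so that $a\neq 0$ in $A_0$ implies $a\neq 0$ in $V$. The action of $H$ extends to a linear action on $V$. Averaging an arbitrary inner product over the finite group $H$, I would equip $V$ with an $H$-invariant inner product $\langle\,,\rangle$; with respect to it every $h\in H$ acts as an orthogonal transformation, and in particular $\|a^h\|=\|a\|$ for all $h\in H$.

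The heart of the argument is then the equality case of the triangle inequality. From $na=a^{h_1}+\cdots+a^{h_n}$ together with orthogonality I obtain $n\|a\|=\|na\|=\|a^{h_1}+\cdots+a^{h_n}\|\leq\|a^{h_1}\|+\cdots+\|a^{h_n}\|=n\|a\|$, so the triangle inequality is an equality. If $a\neq 0$, equality forces the vectors $a^{h_1},\ldots,a^{h_n}$ to be nonnegative multiples of a single common unit vector; since they all have the same length $\|a\|>0$, they must coincide. Substituting back gives $na=na^{h_1}$, hence $a=a^{h_1}$, which says $a\in C_{A_0}(h_1)=0$ because $h_1\neq 1$, contradicting $a\neq 0$. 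Therefore $a=0$.

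I expect the two points that require care to be the reduction to finite rank (which makes $V$ finite-dimensional and legitimizes the embedding $A_0\hookrightarrow V$) and the precise invocation of the equality case of the triangle inequality in an inner product space. One could phrase the final step instead through Cauchy--Schwarz applied to the inner products $\langle a^{h_i},a\rangle$, which is perhaps marginally more transparent, but the content is identical.
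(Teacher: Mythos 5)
Your proof is correct and follows essentially the same route as the paper: reduce to the finitely generated (hence free) subgroup $\langle a^H\rangle$, embed it in a vector space carrying an $H$-invariant metric, and conclude via the equality case of the triangle inequality together with $C_A(h_1)=0$. The only cosmetic difference is that you average an inner product over $H$ on the real space $A_0\otimes_{\mathbb{Z}}\mathbb{R}$, whereas the paper complexifies and cites unitarizability of complex representations of finite groups; you also spell out the equality-case analysis ($a^{h_i}$ all equal, hence $a=a^{h_1}\in C_A(h_1)=0$) that the paper leaves implicit.
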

\begin{proof} Without loss of generality we can assume that $A=\langle a^H\rangle$. Thus, $A$ is a finitely generated torsion-free group. It follows that $A$ is isomorphic with a direct sum of finitely many copies of $\Bbb Z$ and the group of automorphisms of $A$ is isomorphic with $GL_m(\Bbb Z)$ for some $m$. Hence, $A$ naturally embeds into $\underbrace{{\Bbb C}\oplus\dots\oplus{\Bbb C}}_m$ and we also have an induced embedding of $H$ into $GL_m(\Bbb C)$. Since any complex representation of a finite group is equivalent to a unitary one \cite[Exercise 10.6]{cr}, without loss of generality we can assume that the induced embedding of $H$ into $GL_m(\Bbb C)$ is actually an embedding into $U_m(\Bbb C)$. But then the triangle inequality shows that $$na=a^{h_1}+\dots+a^{h_n}$$ is possible only in the case where $a=0$.
\end{proof}
The following corollary is now immediate.
\begin{corollary}\label{bbb} If $0\neq b\in A$, then the sequence $(b,b,\dots,b)$ is $H$-independent.
\end{corollary}
\begin{proof}[Proof of Theorem \ref{tri}] By \cite[Theorem 3.9]{caeufe} it is sufficient to show that $L$ satisfies the $c$th selective nilpotency condition, that is,
$$[L_{a_{1}},L_{a_{2}},\ldots,L_{a_{c+1}}]=0 \ \textrm{ whenever } (a_1,a_{2},\ldots,a_{c+1})\in F_{c+1}$$
and that there is a $(c,k,q)$-bounded number $m$ such that $$[L_a,\underbrace{L_b,\ldots,L_b}_{m}]=0 \ \textrm{ for all } a,b\in A.$$
The selective nilpotency of $L$ can be established precisely as in the proof of Theorem \ref{cef}. Hence, it is sufficient to show that $L$ verifies the other condition. Choose $a,b\in A$. If $b=0$, by the hypothesis we have $[L_a,\underbrace{L_b,\ldots,L_b}_{k}]=0$. Therefore we assume that $b\neq 0$. Put $$D(b)=\{a\in A\vert\ [L_a,\underbrace{L_b,\ldots,L_b}_{c}]\neq0\} \text{ \ \ and \ \ } M=M(\underbrace{b,\dots,b}_c).$$ We have $D(b)\subseteq M\cup \{0\}$ and so by Lemma \ref{LemmaM} $|D(b)|\leq q^{c+1}+1$. Put $a_i=a+ib$, where $i=0,1,\dots$. Since $A$ is torsion-free, $a_i\neq a_j$ whenever $i\neq j$. Let $d$ be the least index for which $a_d\not\in D(b)$. It is clear that $d\leq q^{c+1}+2$. By the choice of  $d$ we have $$[L_{a_d},\underbrace{L_b,\ldots,L_b}_{c}]=0.$$
Taking into account that  $$[L_a,\underbrace{L_b,\ldots,L_b}_{d}]\leq L_{a_d}$$ write $$[L_a,\underbrace{L_b,\ldots,L_b}_{d+c}]\leq [L_{a_d},\underbrace{L_b,\ldots,L_b}_{c}]=0.$$ Since $d\leq q^{c+1}+2$, the theorem follows.
\end{proof}

\section{Proofs of Theorems \ref{posa} and \ref{grsa}}

Throughout this section $A$ will denote a non-cyclic $p$-group of order $p^2$ for some prime $p$. As in the previous sections $H$ denotes the finite group of order $q$ that acts on $A$ in such a manner that $C_{A}(h)=0$ for every $h\in H\backslash\{1\}$.

\begin{lemma}\label{free1} Let $n$ be a positive integer and $h_1,\dots,h_n$ not necessarily distinct elements of $H\setminus\{1\}$. Suppose that there exists a non-zero element $a\in A$ such that $na=a^{h_1}+\dots+a^{h_n}$. Then $p\leq p_0$, where $p_0=p_0(n,q)$ is a constant depending only on $n$ and $q$. 
\end{lemma}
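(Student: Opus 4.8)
The plan is to transfer the equation to characteristic zero, where the unitary/triangle-inequality argument of Lemma~\ref{free} applies, and then to read off a bound on $p$ from a divisibility relation between a rational integer and an algebraic norm. First I would dispose of the trivial case: if $p\le q$ there is nothing to prove, so I may assume $p>q\ge|H|$. In particular $p\nmid|H|$, since otherwise $H$ would contain an element of order $p$, which acting on the elementary abelian group $A\cong\mathbb F_p^2$ would be unipotent and hence fix a nonzero vector, contradicting $C_A(h)=0$. Thus the action of $H$ on $A$ is coprime. Writing $\rho\colon H\to GL_2(\mathbb F_p)$ for the corresponding fixed-point-free representation and setting $\xi=n\cdot1-(h_1+\dots+h_n)$ in the integral group ring $\mathbb ZH$, the hypothesis $na=a^{h_1}+\dots+a^{h_n}$ with $a\neq0$ says exactly that $\rho(\xi)=nI-\sum_i\rho(h_i)\in M_2(\mathbb F_p)$ has nonzero kernel, i.e.\ $\det\rho(\xi)=0$ in $\mathbb F_p$.

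Next I would lift $\rho$ to characteristic zero. Since $p\nmid|H|$ the group algebra $\mathbb F_pH$ is semisimple, so the $\mathbb F_pH$-module $A$ is the reduction modulo a prime $\mathfrak p\mid p$ of a genuine $\mathcal OH$-lattice, where $\mathcal O=\mathbb Z[\zeta]$ and $\zeta$ is a primitive $|H|$-th root of unity; its complexification is a $2$-dimensional representation $\tilde\rho\colon H\to GL_2(\mathbb C)$ with $\tilde\rho\equiv\rho\pmod{\mathfrak p}$. For $h\neq1$ one has $\det(\tilde\rho(h)-1)\equiv\det(\rho(h)-1)\not\equiv0\pmod{\mathfrak p}$, so $\tilde\rho(h)$ has no eigenvalue $1$ and $\tilde\rho$ is again fixed-point-free. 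As in Lemma~\ref{free} I may take $\tilde\rho$ unitary \cite{cr}; then any $v$ in the kernel of $\tilde\rho(\xi)$ satisfies $nv=\sum_i\tilde\rho(h_i)v$, so $n\|v\|=\|\sum_i\tilde\rho(h_i)v\|\le\sum_i\|\tilde\rho(h_i)v\|=n\|v\|$, and equality in the triangle inequality forces $\tilde\rho(h_i)v=v$ for every $i$; fixed-point-freeness then gives $v=0$. Hence $\tilde\rho(\xi)$ has trivial kernel and $\Delta:=\det\tilde\rho(\xi)\neq0$.

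Finally I would extract the bound. The algebraic integer $\Delta\in\mathcal O$ reduces to $\det\rho(\xi)=0$ modulo $\mathfrak p$, so $\Delta\in\mathfrak p$ and therefore $p\mid N_{\mathbb Q(\zeta)/\mathbb Q}(\Delta)$ in $\mathbb Z$. On the other hand $\Delta\neq0$, so its norm is a nonzero rational integer; and since $\tilde\rho$ is unitary, $\|\tilde\rho(\xi)\|\le n+n=2n$, whence every eigenvalue of the $2\times2$ matrix $\tilde\rho(\xi)$ has modulus at most $2n$ and $|\Delta|\le(2n)^2$. The same estimate holds for every Galois conjugate of $\Delta$, since a Galois automorphism of $\mathbb Q(\zeta)$ merely replaces $\tilde\rho$ by another unitary fixed-point-free lift; thus $1\le|N_{\mathbb Q(\zeta)/\mathbb Q}(\Delta)|\le(2n)^{2d}$ with $d=[\mathbb Q(\zeta):\mathbb Q]\le|H|=q$. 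Combining $p\mid N_{\mathbb Q(\zeta)/\mathbb Q}(\Delta)$ with this estimate yields $p\le(2n)^{2q}$, so one may take $p_0(n,q)=(2n)^{2q}$. The main obstacle is the passage to characteristic zero---verifying that a genuine $2$-dimensional complex lift $\tilde\rho$ of $\rho$ exists, reduces correctly modulo $\mathfrak p$, and inherits fixed-point-freeness---after which both the archimedean estimate and the divisibility are routine.
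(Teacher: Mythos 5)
Your proof is correct, but it takes a genuinely different route from the paper's. The paper argues by contradiction and compactness: assuming the lemma fails for infinitely many primes $p_1,p_2,\ldots$, it forms an ultraproduct, with respect to a non-principal ultrafilter, of the corresponding family of two-dimensional representations of $H$, obtaining a fixed-point-free representation of $H$ on a $2$-dimensional space $V$ over a field of characteristic zero together with a non-zero vector $v$ satisfying $nv=v^{h_1}+\dots+v^{h_n}$, which contradicts Lemma \ref{free}; this establishes the \emph{existence} of $p_0(n,q)$ but yields no explicit value. You instead work with a single prime at a time: you lift the mod-$p$ representation to characteristic zero (legitimate, since $p>q$ makes $\mathbb{F}_pH$ semisimple, so the decomposition map is a bijection and every module in characteristic $p$ lifts to a lattice in characteristic zero), rerun the unitary triangle-inequality argument of Lemma \ref{free} to conclude $\Delta=\det\tilde\rho(\xi)\neq 0$, and then play the archimedean bound $|\sigma(\Delta)|\leq (2n)^2$ for all Galois conjugates $\sigma$ against the divisibility $p\mid N_{\mathbb{Q}(\zeta)/\mathbb{Q}}(\Delta)$ to extract the explicit bound $p\leq (2n)^{2q}$. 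Both proofs ultimately rest on the same characteristic-zero fact, but your transfer mechanism (modular lifting plus a norm estimate) is effective: it produces an explicit $p_0(n,q)$ and hence an explicit constant $P(c,q)$ in Theorem \ref{posa}, which the paper's ultrafilter argument cannot provide. The price is the (standard but nontrivial) modular representation theory in the lifting step, plus a small point you pass over lightly: from the local lattice realization $\Delta$ is a priori only $\mathfrak{p}$-integral, though a one-line argument---$\tilde\rho$ stabilizes a full lattice over the ring of integers of $\mathbb{Q}(\zeta)$, and the basis-independent determinant is therefore integral at every prime---shows it is a genuine algebraic integer, so the norm estimate goes through.
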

\begin{proof} Assume that the lemma is false and there exist  infinitely many primes $p_{1},p_{2},\ldots$ such that for each $i$ the elementary abelian $p_i$-group $A_i$ of rank two admits an action by $H$ with the properties that 
$ C_{A_i}(h)=  0$ for all $h\in H\setminus\{1\}$ and there exists $0\neq a_i\in A_i$ for which $na_{i}=a_{i}^{h_1}+\dots+a_{i}^{h_n}$.
  
We view each group $A_i$ as a  linear space over the field with $p_i$ elements. Thus, we obtain a family of  linear representations of $H$.   An ultraproduct with respect to  some non-principal ultrafilter on $\Bbb N$  of this family of representations is a representation $\rho:H \to GL(V)$, where $V$ is a $2$-dimensional vector space over a field of characteristic zero (see for example \cite[Appendix, Theorem B.6]{hall}). Furthermore it easy to check that $C_V(h)=0$ for all $h\in H\setminus\{1\}$ and $V$ contains a non-zero vector $v$ such that $nv=v^{h_1}+\dots+v^{h_n}$.

 This contradicts Lemma \ref{free}. The proof is complete.
\end{proof}

Formally, the constant $p_0$ in the above lemma also depends on the choise of
$h_1,\dots,h_n$. However it is clear that the number of all possible choices of $n$ nontrivial elements in $H$ is bounded in terms of $n$ and $q$ only. Therefore there exists an ``absolute" constant which is independent of the choice of $h_1,\dots,h_n$. In the sequel $p_0$ will always mean that ``absolute" constant. The next corollary is immediate and so we omit the proof.

\begin{corollary}\label{ppp} If $p>p_0(c,q)$ and $0\neq b\in A$, then the sequence $\underbrace{(b,b,\dots,b)}_c$ is $H$-independent.
\end{corollary}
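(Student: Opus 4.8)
The plan is to obtain this by contraposition from Lemma \ref{free1}, in exact parallel with the way Corollary \ref{bbb} was deduced from Lemma \ref{free}; the only change is that Lemma \ref{free1} yields a bound on $p$ rather than forcing the element to be zero. First I would unwind what $H$-dependence means for the constant sequence $(\underbrace{b,\dots,b}_c)$. By the definition, if this sequence were $H$-dependent there would exist distinct indices $i_1,\dots,i_m\in\{1,\dots,c\}$ and nontrivial elements $h_1,\dots,h_m\in H\setminus\{1\}$ with
$$a_{i_1}+\cdots+a_{i_m}=a_{i_1}^{h_1}+\cdots+a_{i_m}^{h_m}.$$
Since every entry of the sequence equals $b$, each $a_{i_j}=b$, and the relation collapses to
$$mb=b^{h_1}+\cdots+b^{h_m}$$
for some integer $m$ with $1\le m\le c$.

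Next I would feed this into Lemma \ref{free1}, applied to the nonzero element $a=b$ with $n=m$. The existence of such a relation with nontrivial $h_j$ forces $p\le p_0(m,q)$. The final step is then purely a matter of the constant: since $m\le c$, and since (as remarked immediately after Lemma \ref{free1}) $p_0$ may be taken \emph{absolute}, meaning independent of the particular tuple $h_1,\dots,h_m$ and taken as the maximum over the finitely many tuples that can arise for all $m\le c$, we conclude $p\le p_0(c,q)$. This contradicts the standing hypothesis $p>p_0(c,q)$, and therefore the constant sequence must be $H$-independent.

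I do not expect any genuine obstacle: the statement is simply the positive-characteristic analogue of Corollary \ref{bbb}, with Lemma \ref{free1} replacing Lemma \ref{free}. The only mild point requiring care is the bookkeeping with the constant, namely ensuring that the single symbol $p_0(c,q)$ dominates every $p_0(m,q)$ for $m\le c$ and every admissible choice of the $h_j$; this is precisely what the ``absolute constant'' convention introduced just before the corollary is designed to handle, which is why the author can declare the proof immediate.
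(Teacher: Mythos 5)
Your proof is correct and is exactly the argument the paper has in mind when it calls the corollary ``immediate'': contraposition via Lemma \ref{free1} applied with $a=b$ and $n=m\leq c$, using the absolute-constant convention stated right before the corollary. Your explicit remark that $p_0(c,q)$ should be taken to dominate $p_0(m,q)$ for all $m\leq c$ (i.e.\ chosen monotone in the first argument) is the one piece of bookkeeping the paper leaves silent, and you handle it correctly.
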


\begin{proof}[Proof of Theorem \ref{posa}] Set $P=p_0(c,q)+q^{c+1}+1$ and assume that $p\geq P$. By \cite[Theorem 3.9]{caeufe} it is sufficient to show that $L$ satisfies the $c$th selective nilpotency condition, that is, $$[L_{a_{1}},L_{a_{2}},\ldots,L_{a_{c+1}}]=0 \ \textrm{ whenever } (a_1,a_{2},\ldots,a_{c+1})\in F_{c+1}$$
and that there is a $(c,q)$-bounded number $m$ such that $$[L_a,\underbrace{L_b,\ldots,L_b}_{m}]=0 \ \textrm{ for all } a,b\in A.$$ The selective nilpotency of $L$ can be established precisely as in the proof of Theorem \ref{cef}. Hence, it is sufficient to show that $L$ verifies the other condition. We will show first that $$[L_a,\underbrace{L_0,\ldots,L_0}_{c}]=0 \ \textrm{ for all } a\in A.$$ Indeed, since $A$ is noncyclic, we can choose a proper subgroup $B< A$ such that $0,a\in B$. By the hypothesis $L_B$ is nilpotent of class at most $c$. Hence, $[L_a,\underbrace{L_0,\ldots,L_0}_{c}]=0$. 

Now choose $0\neq b\in A$. By Corollary \ref{ppp} the sequence $\underbrace{(b,b,\dots,b)}_c$ is $H$-independent. Put $$D(b)=\{a\in A\vert\ [L_a,\underbrace{L_b,\ldots,L_b}_{c}]\neq0\} \text{ \ \ and \ \ } M=M(\underbrace{b,\dots,b}_c).$$ We have $D(b)\subseteq M\cup \{0\}$ and so by Lemma \ref{LemmaM} $|D(b)|\leq q^{c+1}+1$. As in the proof of Theorem \ref{tri} put $a_i=a+ib$, where $i=0,1,\dots,P-1$. Since $p\geq P$, it follows that $a_i\neq a_j$ whenever $i\neq j$. Let $d$ be the least index for which $a_d\not\in D(b)$. It is clear that $d\leq q^{c+1}+2$.  By the choice of $d$ we have  $$[L_{a_d},\underbrace{L_b,\ldots,L_b}_{c}]=0.$$ Taking into account that  $$[L_a,\underbrace{L_b,\ldots,L_b}_{d}]\leq L_{a_d}$$ write $$[L_a,\underbrace{L_b,\ldots,L_b}_{d+c}]\leq [L_{a_d},\underbrace{L_b,\ldots,L_b}_{c}]=0.$$ The theorem follows.
\end{proof}

We will now require some facts on coprime actions of finite groups (see for example \cite[6.2.2, 6.2.4]{go}). 
\begin{lemma}\label{11} Let $A$ be a group of automorphisms
of the finite group $G$ with $(|A|,|G|)=1$.   
\begin{enumerate}
\item If $N$ is an $A$-invariant normal subgroup
of $G$ we have $C_{G/N}(A)=C_G(A)N/N$;
\item  If $H$ is an $A$-invariant $p$-subgroup of $G$,
then $H$ is contained in an $A$-invariant Sylow $p$-subgroup of $G$;
\item If $P$ is an $A$-invariant Sylow $p$-subgroup of $G$, then
$C_P(A)$ is a Sylow $p$-subgroup of $C_G(A)$.
\item If $A$ is a non-cyclic elementary abelian group we have $G=\langle C_G(a)\ \vert\  a\in A\setminus\{1\}\rangle$.
\end{enumerate}
\end{lemma}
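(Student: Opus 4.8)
\noindent\emph{Proof proposal.} All four assertions are classical facts of coprime action theory, and in the paper I would simply quote them from \cite[6.2.2, 6.2.4]{go}. Were I to prove them from scratch, the plan would be to rest everything on two inputs: the coprime fixed-point principle (equivalently, the vanishing of $H^1(A,N)$ when $(|A|,|N|)=1$, extended to non-abelian $N$ through a chief series), and the basic coprime Sylow theory asserting that $G$ has an $A$-invariant Sylow $p$-subgroup and that any two such are conjugate by an element of $C_G(A)$. For (1) the inclusion $C_G(A)N/N\subseteq C_{G/N}(A)$ is immediate; for the reverse I would take a coset $gN$ fixed by $A$, note that it is then an $A$-invariant set of size $|N|$ coprime to $|A|$, and apply the fixed-point principle to the cocycle $a\mapsto g^{-1}g^{a}\in N$ to produce $x\in gN$ with $x\in C_G(A)$ and $xN=gN$.

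For (2) my plan is induction on $|G:H|$: if $H$ is Sylow there is nothing to do, and otherwise an $A$-invariant Sylow $p$-subgroup $S$ of the $A$-invariant group $N_G(H)$ (which exists by the existence input applied to the coprime action of $A$ on $N_G(H)$) strictly contains the normal $p$-subgroup $H$, so the inductive hypothesis applies to $S$. For (3) I would observe that $C_P(A)$ is a $p$-subgroup of $C_G(A)$, take any Sylow $p$-subgroup $Q$ of $C_G(A)$ (automatically $A$-invariant, being centralized by $A$), enlarge it by (2) to an $A$-invariant Sylow $p$-subgroup $P'$ of $G$ so that $Q\le C_{P'}(A)$, and then use the $C_G(A)$-conjugacy of $P$ and $P'$ to get $P'=P^{c}$ with $c\in C_G(A)$; since $c$ commutes with the $A$-action this gives $C_{P'}(A)=C_P(A)^{c}$, and maximality of $Q$ forces $Q=C_P(A)^{c}$, so $|C_P(A)|=|C_G(A)|_p$.

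For (4) I would first reduce to $A\cong\Bbb Z_p\times \Bbb Z_p$, which still acts coprimely and is non-cyclic, and note that, $A$ being abelian, each $C_G(a)$ is $A$-invariant, so $G_0=\langle C_G(a)\mid a\in A\setminus\{1\}\rangle$ is $A$-invariant. Arguing by induction on $|G|$, I would pick a minimal $A$-invariant normal subgroup $N$; by (1) and the inductive hypothesis in $G/N$ one gets $G=G_0N$, so it remains to show $N\le G_0$. The essential case is $N$ elementary abelian, where $N$ is a semisimple $\Bbb F_r A$-module on which $A$ acts coprimely; since $A$ is abelian and non-cyclic, every non-trivial irreducible constituent is annihilated by some order-$p$ subgroup $A_i$, whence $N=\sum_i C_N(A_i)\le G_0$. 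As $C_G(A_i)\subseteq C_G(a)$ for each $a\in A_i\setminus\{1\}$, this also realizes $G$ as generated by the subgroups $C_G(A_i)$.

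The step I expect to be the genuine obstacle is not any of these reductions but the foundational machinery itself: the existence and $C_G(A)$-conjugacy of $A$-invariant Sylow subgroups, and the fixed-point principle behind (1) and (4), rest on Schur--Zassenhaus and, in full generality, on the solvability of $A$ provided by the Feit--Thompson theorem. Since developing this from scratch is exactly what one wants to avoid, in practice I would cite \cite[6.2.2, 6.2.4]{go}.
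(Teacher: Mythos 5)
Your proposal coincides with the paper, which offers no proof of this lemma at all and simply cites \cite[6.2.2, 6.2.4]{go} --- exactly your stated primary plan --- and your optional from-scratch sketches of (1)--(3) are the standard, correct arguments (including your accurate identification of Schur--Zassenhaus, Glauberman-type conjugacy, and Feit--Thompson as the real foundational load). The one soft spot is in your sketch of (4): if $G$ itself has no proper non-trivial $A$-invariant normal subgroup and is non-abelian (a product of simple groups permuted by $A$), your minimal-normal-subgroup induction never produces an elementary abelian $N$, so the module argument does not apply; the standard repair is to reduce first to nilpotent $G$ by noting via (2) that $G$ is generated by its $A$-invariant Sylow subgroups, which is how the textbook proof proceeds.
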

 
The next lemma and its corollary can be extracted from arguments given in \cite{ward73}. For the reader's convenience we supply a proof.

\begin{lemma}\label{nuj} Let $p$ be a prime and $A$ an elementary group of order $p^2$ acting on a finite $p'$-group $G$ in such a way that $C_G(a)$ is nilpotent for every nontrivial $a\in A$. Let $r$ be a prime dividing $|C_G(A)|$ and $x$ an $r$-element in $C_G(A)$. Then the index $[G:C_G(x)]$ is an $r$-power.
\end{lemma}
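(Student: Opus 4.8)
The plan is to reduce the statement to a statement about Sylow subgroups: the index $[G:C_G(x)]$ is an $r$-power if and only if for every prime $s\neq r$ the $s$-part of $[G:C_G(x)]$ is trivial, which in turn amounts to showing that $C_G(x)$ contains a full Sylow $s$-subgroup of $G$. Thus I would fix an arbitrary prime $s\neq r$ and aim to prove that $x$ centralizes some Sylow $s$-subgroup of $G$. The whole argument then rests on exploiting the nilpotency of the $C_G(a)$ together with the coprime-action machinery of Lemma \ref{11}.

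First I would note that, since $G$ is a $p'$-group and $A$ is a $p$-group, we have $(|A|,|G|)=1$, so all parts of Lemma \ref{11} apply. Applying part (2) to the trivial subgroup yields an $A$-invariant Sylow $s$-subgroup $S$ of $G$. Because $A$ is non-cyclic elementary abelian and acts coprimely on $S$, part (4) (applied to $S$ in place of $G$) gives the crucial generation
$$S=\langle C_S(a)\mid a\in A\setminus\{1\}\rangle.$$
Next, for each nontrivial $a\in A$, the group $\langle a\rangle$ acts coprimely on $G$ and leaves $S$ invariant, so part (3) shows that $C_S(a)=S\cap C_G(a)$ is a Sylow $s$-subgroup of $C_G(a)$.

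Now I would bring in the hypothesis that $C_G(a)$ is nilpotent. Since $x\in C_G(A)\subseteq C_G(a)$ is an $r$-element and $C_G(a)$ is the direct product of its Sylow subgroups, $x$ lies in the Sylow $r$-subgroup of $C_G(a)$ and therefore centralizes every Sylow $t$-subgroup with $t\neq r$; in particular $x$ centralizes the Sylow $s$-subgroup $C_S(a)$. As this holds for every nontrivial $a\in A$, the element $x$ centralizes all the generators $C_S(a)$ of $S$, hence centralizes $S$ itself. Consequently $S\leq C_G(x)$, so the $s$-part of $[G:C_G(x)]$ is trivial. Since $s\neq r$ was arbitrary, $[G:C_G(x)]$ is an $r$-power, as desired.

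I expect the only delicate point to be the careful bookkeeping with the coprime-action lemmas, specifically making sure that the $A$-invariant Sylow $s$-subgroup $S$ can simultaneously be used (i) to invoke the generation identity of part (4) and (ii) to identify each $C_S(a)$ as a genuine Sylow $s$-subgroup of the nilpotent group $C_G(a)$ via part (3). Once those identifications are in place, the passage from ``$x$ centralizes each $C_S(a)$'' to ``$x$ centralizes $S$'' is immediate from the generation, and the nilpotency of the $C_G(a)$ does all the real work.
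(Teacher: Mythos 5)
Your proof is correct and follows essentially the same route as the paper: fix $s\neq r$, take an $A$-invariant Sylow $s$-subgroup $S$, use the generation $S=\langle C_S(a)\mid a\in A\setminus\{1\}\rangle$ from Lemma \ref{11}(4), and observe that the $r$-element $x$ and the $s$-group $C_S(a)$ both lie in the nilpotent group $C_G(a)$, so $x$ centralizes each $C_S(a)$ and hence all of $S$. The only (harmless) difference is your appeal to Lemma \ref{11}(3): it is not needed, since $x$ centralizes any $s$-subgroup of the nilpotent group $C_G(a)$, whether or not $C_S(a)$ is a full Sylow $s$-subgroup of $C_G(a)$.
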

\begin{proof} If $G$ is not an $r$-group choose a prime $s\neq r$ dividing $|G|$. By Lemma \ref{11} $G$ possesses an $A$-invariant Sylow $s$-subgroup $S$ and $S=\langle C_S(a)\ \vert\ a\in A\setminus\{1\}\rangle$. We observe that for every $1\neq a\in A$ both $x$ and $C_S(a)$ are contained in the nilpotent subgroup  $C_G(a)$. Since $r\neq s$, it follows that $x$ centralizes $C_S(a)$. Taking into account that $S=\langle C_S(a)\ \vert\  a\in A\setminus\{1\}\rangle$ we conclude that $x$ centralizes $S$. Hence, the index $[G:C_G(x)]$ is an $r$-power, as required. 
\end{proof}

\begin{corollary}\label{solu} Let $p$ be a prime and $A$ an elementary group of order $p^2$ acting on a finite $p'$-group $G$ in such a way that $C_G(a)$ is nilpotent for every nontrivial $a\in A$. Then $G$ is soluble.
\end{corollary}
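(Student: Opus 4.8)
The plan is to argue by induction on $|G|$, taking a counterexample of minimal order and forcing a contradiction. First I would record that the hypotheses pass to $A$-invariant sections: if $H$ is an $A$-invariant subgroup then $C_H(a)=H\cap C_G(a)$ is nilpotent, and if $N$ is an $A$-invariant normal subgroup then by Lemma \ref{11}(1) we have $C_{G/N}(a)=C_G(a)N/N$, a homomorphic image of a nilpotent group and hence nilpotent. Consequently, if $G$ possessed a proper nontrivial $A$-invariant normal subgroup $N$, both $N$ and $G/N$ would be soluble by minimality and so would $G$. Thus $G$ has no such subgroup; in particular $Z(G)$ and $F(G)$, being characteristic and hence $A$-invariant, are trivial. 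A standard argument (take a minimal normal subgroup and form its $A$-orbit product, which must be all of $G$) then shows that $G$ is a direct product $G=S_1\times\cdots\times S_t$ of non-abelian simple groups, and $A$ permutes the factors $S_i$.

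Next I would reduce to a single factor. If some $a\in A\setminus\{1\}$ induced a nontrivial permutation of the $S_i$, then, since $a$ has order $p$, it would cyclically permute a block of $p$ factors, and the fixed-point subgroup $C_G(a)$ would contain a diagonally embedded copy of $S_1$ (the fixed points of the shift-type automorphism, isomorphic to $C_{S_1}(a^p)=S_1$). Such a subgroup is non-nilpotent, contradicting the hypothesis. Hence $A$ stabilises every $S_i$; each $S_i$ then inherits the hypotheses and, being a non-soluble group of order at most $|G|$, minimality forces $t=1$. Therefore $G$ is a non-abelian simple group on which $A\cong C_p\times C_p$ acts coprimely with every $C_G(a)$ nilpotent.

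With $G$ simple, the endgame is Lemma \ref{nuj} together with Burnside's classical theorem that a non-abelian simple group has no nontrivial conjugacy class of prime-power size. Provided $C_G(A)\neq 1$, I would pick a prime $r$ dividing $|C_G(A)|$ and a nontrivial $r$-element $x\in C_G(A)$; Lemma \ref{nuj} yields that $[G:C_G(x)]$ is a power of $r$, and since $Z(G)=1$ this index exceeds $1$. This is a nontrivial conjugacy class of prime-power size in a simple group, the desired contradiction.

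The main obstacle is precisely the step $C_G(A)\neq 1$. It is genuinely nontrivial, being \emph{false} for general coprime actions of $C_p\times C_p$: on a direct sum of two nontrivial one-dimensional modules the fixed-point set is zero, so the conclusion must be extracted from the interaction of simplicity with the nilpotency of the $C_G(a)$. I expect to obtain it via a fixed-point count: Brauer's permutation lemma identifies the number of $A$-invariant conjugacy classes with the number of $A$-invariant irreducible characters, and Glauberman's lemma converts a nontrivial $A$-invariant class into a nontrivial element of $C_G(A)$; alternatively one appeals directly to Ward's analysis in \cite{ward73}. Controlling this count (in particular handling the residue of the class number modulo $p$) is where the simplicity of $G$, rather than mere solubility, is used in an essential way.
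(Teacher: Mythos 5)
Your reduction to a single non-abelian simple group is correct (the hypotheses do pass to $A$-invariant sections via Lemma \ref{11}(1), your orbit argument on the simple factors works, and the diagonal-subgroup argument rules out a nontrivial permutation action of $A$ on them), and your endgame via Lemma \ref{nuj} and Burnside's lemma on conjugacy classes of prime-power size is fine \emph{provided} $C_G(A)\neq 1$. But that proviso is exactly where the proof fails, and you are right to flag it: it is not a technical lacuna but the entire depth of the statement. If $C_G(A)=1$, your configuration is precisely a coprime fixed-point-free automorphism group of type $(p,p)$, and the solubility of such groups is Martineau's theorem \cite{martineau} --- a substantial standalone result, which the paper invokes as a black box. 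Your proposed counting repair does not close the gap: by Glauberman's lemma every $A$-invariant conjugacy class of $G$ meets $C_G(A)$, so $C_G(A)=1$ means every nontrivial class lies in an $A$-orbit of length divisible by $p$, giving only the congruence $k(G)\equiv 1 \pmod p$ for the class number; no contradiction with simplicity follows from this by elementary means, and Brauer's permutation lemma merely converts it into the same congruence for irreducible characters. Note also that the nilpotency of the $C_G(a)$ gives you no leverage here, since in the fixed-point-free case one is trying to prove solubility without any information of that kind being usable in an obvious way.

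For contrast, the paper's proof avoids the minimal-counterexample reduction entirely and is shorter at both ends. Instead of Burnside's classical lemma for simple groups it uses Kazarin's strengthening \cite{kazarin}: in \emph{any} finite group, an element whose centralizer has prime-power index lies in the soluble radical. Combined with Lemma \ref{nuj} this puts $C_G(A)$ inside the soluble radical $R(G)$; then Lemma \ref{11}(1) gives $C_{G/R(G)}(A)=1$, and Martineau's theorem applied to $G/R(G)$ yields solubility of the quotient, hence of $G$. So Kazarin's theorem removes the need to descend to a simple group, and Martineau's theorem absorbs exactly the fixed-point-free case you could not handle. If you simply cite \cite{martineau} to dispose of the case $C_G(A)=1$ in your minimal counterexample, your argument becomes a complete (if longer) variant of the paper's proof; without some input of that strength, the gap cannot be bridged by the counting arguments you sketch.
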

\begin{proof} The Burnside-Kazarin theorem \cite{kazarin} tells us that if $x$ is an element of $G$ such that the index $[G:C_G(x)]$ is a prime-power, then $x$ belongs to the soluble radical of $G$. In view of the above lemma we now conclude that $C_G(A)$ is contained in the soluble radical of $G$. If $C_G(A)=1$, then by Martineau's theorem \cite{martineau} $G$ is soluble. Putting these observations together with Lemma \ref{11} (1) we conclude that $G$ is soluble.
\end{proof}

Ward proved that under the hypothesis of Corollary \ref{solu} $G$ is actually metanilpotent \cite{ward} and examples show that $G$ need not be nilpotent.  Indeed, let $r$ and $s$ be odd primes and $H$ and $K$ the groups of order $r$ and $s$ respectively. Denote by $V=\langle v_1,v_2\rangle$ the four-group and by $Y$ the semidirect product of $K$ by $V$ such that $v_1$ acts on $K$ trivially and $v_2$ takes every element of $K$ to its inverse. Let $B$ be the base group of the wreath product $H\wr Y$ and note that $[B,v_1]$ is normal in $H\wr Y$. Set $G=[B,v_1]K$. The group $G$ is naturally acted on by $V$ and $C_G(v)$ is abelian for every nontrivial $v\in V$. It is clear that $G$ is not nilpotent when $r\neq s$.
Our next result shows however that under the hypothesis of Theorem \ref{grsa} $G$ is nilpotent. Actually the nilpotency of $G$ will be proved without any restrictions on the prime $p$.

\begin{proposition}\label{nilpo} If a finite group $G$ admits a coprime action by a Frobenius group $FH$ where $F$ is non-cyclic abelian and $C_{G}(H)$ and $C_{G}(a)$ are nilpotent for all $a\in F\setminus \{1\}$, then $G$ is nilpotent.
\end{proposition}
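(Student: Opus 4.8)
The plan is to argue by contradiction, taking $G$ to be a counterexample of least order. First I would record that the hypotheses are inherited by $FH$-invariant sections: if $N$ is an $FH$-invariant normal subgroup of $G$, then by Lemma \ref{11}(1) we have $C_{G/N}(H)=C_G(H)N/N$ and $C_{G/N}(a)=C_G(a)N/N$, so $G/N$ again satisfies the hypotheses, and likewise $C_N(H)=C_G(H)\cap N$ and $C_N(a)=C_G(a)\cap N$ show that $N$ does too. By minimality every proper $FH$-invariant section of $G$ is therefore nilpotent. Since $F$ is non-cyclic abelian, some Sylow subgroup $F_p$ is non-cyclic, and $\Omega_1(F_p)$ contains an $H$-invariant elementary abelian subgroup $E$ of order $p^2$ on which $H$ acts fixed-point-freely. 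As $(|FH|,|G|)=1$, $G$ is a $p'$-group, so Corollary \ref{solu} and Ward's theorem \cite{ward} apply to the action of $E$ and yield that $G$ is soluble, in fact metanilpotent.

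Next I would pin down the structure of the minimal counterexample through the Fitting subgroup $\operatorname{Fit}(G)$, which is $FH$-invariant, proper (as $G$ is not nilpotent) and non-trivial (as $G$ is soluble). The quotient $G/\operatorname{Fit}(G)$ is a proper $FH$-invariant section, hence nilpotent. If a prime $r$ divides $|G/\operatorname{Fit}(G)|$, the preimage $S$ of the characteristic Sylow $r$-subgroup of $G/\operatorname{Fit}(G)$ is $FH$-invariant and normal; were $S$ proper it would be nilpotent and thus contained in $\operatorname{Fit}(G)$, a contradiction, so $G/\operatorname{Fit}(G)$ is an $r$-group. A parallel use of the quotients $G/O_t(G)$, together with the relations $[R,O_t(G)]\le O_t(G)$ for the normal subgroups $O_t(G)$, forces $\operatorname{Fit}(G)$ to be a $t$-group for a single prime $t=s\ne r$ and forces $O_r(G)=1$. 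Writing $W=\operatorname{Fit}(G)=O_s(G)$ and choosing an $FH$-invariant Sylow $r$-subgroup $R$, I obtain a semidirect decomposition $G=WR$ with $W$ normal in $G$, $W\cap R=1$ and $R\ne 1$. Since $G$ is soluble, $C_G(W)\le\operatorname{Fit}(G)=W$, so $C_R(W)=1$ and $R$ acts faithfully on $W$. Passing to $W/\Phi(W)$ (a coprime $r$-automorphism trivial on $W/\Phi(W)$ is trivial on $W$) I may assume $W$ is elementary abelian, so that $G$ is non-nilpotent precisely because $[R,W]\ne 1$.

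At this point Lemma \ref{nuj} supplies the key local information. Any $r$-element of $C_G(E)=C_W(E)\times C_R(E)$ has index a power of $r$, hence centralizes the normal Sylow $s$-subgroup $W$, so faithfulness gives $C_R(E)=1$; symmetrically, every $s$-element of $C_G(E)$ centralizes $R$, whence $C_W(E)=C_G(E)\le Z(G)$. For each $a\in E\setminus\{1\}$ the nilpotent centralizer splits as $C_G(a)=C_W(a)\times C_R(a)$ with $[C_W(a),C_R(a)]=1$, while Lemma \ref{11}(4) gives $R=\langle C_R(a)\mid a\in E\setminus\{1\}\rangle$ and $W=\sum_a C_W(a)$. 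The goal is now to prove $[C_R(a),C_W(b)]=1$ for all $a,b\in E\setminus\{1\}$, which yields $[R,W]=1$ and the desired contradiction.

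The main obstacle is exactly the off-diagonal case $a\ne b$, since the diagonal case $a=b$ is already covered by the splitting of $C_G(a)$. The plan is to extend scalars to a splitting field of characteristic $s$ and decompose $W$ into $E$-weight spaces $W=\bigoplus_\lambda W_\lambda$, on which $H$ permutes the non-trivial weights in free orbits of size $|H|$ (because $H$ acts fixed-point-freely on $E$), while $W_0=C_W(E)$ is central. Using that each $a$ acts semisimply one writes $C_W(b)=C_W(E)\oplus[C_W(b),a]$, reducing the claim to showing that $C_R(a)$ centralizes the non-fixed part $[W,a]$. This is where the nilpotency of $C_G(H)$ must enter, through the relation $[C_R(H),C_W(H)]=1$ combined with the free permutation action of $H$ on the weights, so as to transport the diagonal vanishing $[C_R(a),C_W(a)]=1$ across an $H$-orbit and thence to all pairs $(a,b)$; controlling these cross-commutators is the crux of the argument. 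For kernels $F$ of rank at least three one may instead invoke directly the rank-$\geq 3$ theorem quoted from \cite{caeufe}, so that only the rank-two case genuinely requires this weight-space analysis.
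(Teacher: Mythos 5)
Your preliminary reductions are sound and run parallel to the paper's own: the paper likewise takes a minimal counterexample, derives solubility from Corollary \ref{solu}, reduces to $G=RS$ with a unique minimal normal elementary abelian $r$-subgroup (your $W$ and $R$ play the same roles with the primes relabelled), and uses Lemma \ref{nuj} to show that a nontrivial fixed point of $F$ in either Sylow piece would be central, contradicting minimality. The genuine gap is at exactly the point you yourself flag as ``the crux'': the off-diagonal vanishing $[C_R(a),C_W(b)]=1$ is never proved, and the transport mechanism you sketch cannot produce it. Conjugation by $h\in H$ sends $C_R(a)$ to $C_R(a^{h})$ and $C_W(a)$ to $C_W(a^{h})$ \emph{simultaneously}, so the family of diagonal relations $[C_R(a),C_W(a)]=1$ is $H$-stable and applying $H$ to it only yields further diagonal relations, while the extra input $[C_R(H),C_W(H)]=1$ concerns $H$-fixed points alone. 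Worse, the paper's own example (the four-group $V$ acting on $G=[B,v_1]K$ with every $C_G(v)$ abelian and $G$ non-nilpotent) shows that everything you have actually established --- solubility, the diagonal relations, triviality or centrality of the $E$-fixed points --- is consistent with non-nilpotency; so the Frobenius structure of $FH$ and the nilpotency of $C_G(H)$ must enter through a substantive theorem, not through orbit bookkeeping, and the averaging trick that makes the analogous step work for Lie algebras earlier in the paper has no counterpart inside the nonabelian $r$-group $R$.

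What the paper does at this juncture is decisive and simple: once Lemma \ref{nuj} rules out $C_R(F)\neq 1$ and $C_S(F)\neq 1$, it is left with $C_G(F)=1$, and it then invokes Theorem 2.7(c) of \cite{khumashu} --- if a Frobenius group $FH$ acts coprimely on $G$ with $C_G(F)=1$, then $G$ is nilpotent whenever $C_G(H)$ is --- to reach the contradiction. You were in fact one citation away from the same endgame: in your minimal counterexample $Z(G)=1$ (otherwise $G/Z(G)$ is a proper invariant quotient, hence nilpotent, forcing $G$ nilpotent), so your conclusions $C_R(E)=1$ and $C_W(E)\leq Z(G)$ give $C_G(E)=1$, hence $C_G(F)=1$, and quoting \cite{khumashu} finishes the proof along the paper's lines. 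Trying to replace that citation by your weight-space plan amounts to reproving a hard theorem from scratch, and as sketched it does not go through. (Two minor remarks: your appeal to the rank-at-least-three result of \cite{caeufe} is legitimate, though the paper cites \cite{ward1}, which needs no hypothesis on $C_G(H)$ at all; and your $H$-invariant $E$ of order $p^2$ does exist in the remaining case, since when $F$ has rank two the subgroup $\Omega_1(F_p)$ itself has order $p^2$ and is characteristic in $F$.)
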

\begin{proof} We note first that if the rank of $F$ is greater than two, then nilpotency of $G$ follows from \cite{ward1}. A related result bounding the nilpotency class of $G$ was obtained in \cite{shu2001}.  Assume that $F$ is of rank two and the theorem is false. Let $G$ be a counter-example of minimal order. By Corollary \ref{solu} $G$ is soluble and so it easily follows that $G=RS$, where $R$ is a unique minimal normal elementary abelian $r$-subgroup for a prime $r$ and $S$ is an $F$-invariant Sylow $s$-subgroup of $G$ for some prime $s\neq r$. Suppose that $C_R(F)=C_S(F)= 1$. Then $C_G(F)=1$ and Theorem 2.7(c) in \cite{khumashu} tells us that $G$ is nilpotent, a contradiction. Hence, either $C_R(F)\neq 1$ or $C_S(F)\neq 1$.

Suppose $C_R(F)\neq 1$ and choose $1\neq x\in C_R(F)$. By Lemma \ref{nuj} the $[G:C_G(x)]$ is an $r$-power. Since $R$ is abelian, it follows that $R\leq C_G(x)$ and so $x\in Z(G)$. This leads to a contradiction since $R$ is a minimal normal subgroup of $G$.

Now suppose $C_S(F)\neq 1$ and choose $1\neq y\in C_S(F)$. By Lemma \ref{nuj} the $[G:C_G(y)]$ is an $s$-power. It follows that $y$ centralizes $R$. Thus, $C_S(R)\neq 1$ and this again leads to a contradiction since $R$ is a unique minimal normal subgroup of $G$.
\end{proof}

\begin{proof}[Proof of Theorem \ref{grsa}]
By Proposition \ref{nilpo} $G$ is nilpotent. We wish to show that $G$ is nilpotent of $(c,q)$-bounded class. Consider the associated Lie algebra of the group $G$
\begin{equation*}
L(G)=\displaystyle\bigoplus^{n}_{i=1}\gamma_i/ \gamma_{i+1},
\end{equation*}
where $n$ is the nilpotency class of $G$ and $\gamma_i$ are the terms 
of the lower central series of $G$ (see \cite{khu} for general information on $L(G)$ and how it relates with the structure of $G$). The action of the group $FH$ on $G$ induces naturally an action of $FH$ on $L(G)$. We extend the ground ring by a primitive $p$-th root of unity $\omega$ by setting $L=L(G)\otimes_{\mathbb{Z}}\mathbb{Z}[\omega]$. The action of $FH$ on $L(G)$ extends naturally to $L$.

Let $A$ be the character group of $F$, for which we use the additive notation. The group $A$ is non-cyclic of order $p^2$. We define an action of the group $H$ on $A$ by the 
rule $a^h(f)=a(f^h)$ and under this action we have $C_{A}(h)=0$ for each $h\in H\setminus\{1\}$.

For any $a\in A$ we set
\begin{equation*}
L_{a}=\{x\in L \ | \ x^f= a(f)x, \ \textrm{for each} \ f\in F\}.
\end{equation*}
Because the action is semisimple we have the ``generalized eigenspace decomposition'' 
$$L=\bigoplus_{a\in A}L_{a}$$ satisfying
\begin{equation*}
[L_a,L_b]\subseteq L_{a+b} 
\ \textrm{ for all} \ a,b\in  A.
\end{equation*}

Thus, $L$ becomes an $A$-graded Lie algebra. The action of the 
group $H$ on $L$ permutes the components $L_{a}$ in the 
following way: if $h\in H$, then $L_{a}^h=L_{a^{h^{-1}}}$, 
indeed, for $l\in L_{a}$ we have $$(l^{h})^{f}=(l^{hfh^{-1}})^{h}=(l^{f^{h^{-1}}})^{h}=
 a(f^{h^{-1}})l^{h}=a^{h^{-1}}(f)l^{h}$$
for any $f\in F$, so $l^{h}\in L_{a^{h^{-1}}}$.

 Because the action of $FH$ on $G$ is coprime, it is easy to see that the fixed-point subrings $C_{L}(H)$ is nilpotent of class at most $c$. Let $B$ be a proper subgroup of $A$. Since $B$ is cyclic, it follows that $F$ induces a cyclic group of automorphisms on $L_B$. Therefore $L_B$ is contained in $C_L(f)$ for some nontrivial $f\in F$. Thus, by the hypothesis, $L_B$ is nilpotent of class at most $c$. Now Theorem \ref{posa} tells us that $L$ is nilpotent of $(c,q)$-bounded class. The same is true for $L(G)$ and therefore for $G$. The proof is complete.
\end{proof}

\end{document}